\newtheorem{proposition}{Proposition}[section]
\newtheorem{theorem}{Theorem}[section]
\newtheorem{lemma}[theorem]{Lemma}
\newtheorem{corollary}{Corollary}[section]
\theoremstyle{definition}
\newtheorem{definition}[theorem]{Definition}
\newtheorem{example}[theorem]{Example}
\theoremstyle{remark}
\newtheorem{remark}[theorem]{Remark}
\numberwithin{equation}{section}
\begin{document}

\title[Topological structure of projective Hilbert spaces]
{Topological structure of projective Hilbert spaces associated with phase retrieval vectors}

\author[F. Arabyani]{Fahimeh Arabyani Neyshaburi}
\address{Department of Mathematics and Computer Sciences, Hakim Sabzevari University, Sabzevar, Iran.}
\email{f.arabyani@hsu.ac.ir, fahimeh.arabyani@gmail.com}

\author[A. Arefijamaal]{Ali Akbar Arefijamaal}
\address{Department of Mathematics and Computer Sciences, Hakim Sabzevari University, Sabzevar, Iran.}
\email{arefijamaal@hsu.ac.ir;arefijamaal@gmail.com}
\author[Gh. Sadeghi]{Ghadir Sadeghi}
\address{Department of Mathematics and Computer Sciences, Hakim Sabzevari University, Sabzevar, Iran.}
\email{g.sadeghi@hsu.ac.ir}

\subjclass[2020]{Primary 42C15  Secondary 46C05, 54A10 }



\keywords{Frames, phase retrieval vectors, projective Hilbert
space;  weak topology; direct-limit topology}

\begin{abstract}
In this paper,  we explore the  interplay between topological structures and phase retrieval in the context of projective Hilbert spaces. This work provides not only  a deeper understanding and a new classification of the phase retrieval property in Hilbert spaces but also a way for further investigations into the topological underpinnings of quantum states.
\end{abstract}

\maketitle


\par

\section{Introduction}
 The framework of quantum mechanics can
be interpreted  as a  classical probability theory on the projective Hilbert
spaces \cite{Beltrametti, Busch, Holevo, qua3}. Hence,  projective Hilbert spaces, as an extension of Hilbert spaces,  are    significant specially in quantum theory. In fact,
 in quantum mechanics, the states of a quantum mechanic system  are corresponded to wave functions in the Hilbert spaces.
Furthermore,  the wave functions
$\phi$  and
$\lambda \phi$ represent the same state,  for any
$\lambda \neq 0$, and so are regarded as equivalent states. The projective Hilbert space is given by using projectivization of a Hilbert space.
More precisely, suppose $\mathcal{H}$ is a non-trivial  separable Hilbert space and  $\mathcal{H}^{*}=\mathcal{H}\setminus \{0\}$. Then the quotient space,  containing the equivalent classes   on  $\mathcal{H}^{*}$
by identifying two non-zero vectors which   differ by a complex factor,
 is known as the  projective Hilbert space in the literature of mathematics and quantum mechanics, and is usually denoted  by the symbol $P(\mathcal{H})$.
The topology and measurable structures of  the projective Hilbert spaces   have been  studied in depth, see  \cite{qua4, qua2, qua1, qua5, qua3}.
On the other hand, in the setting of separable Hilbert spaces
 a quotient space $\hat{\mathcal{H}}$  is   defined with regard to the phase retrieval theory and is closely related to the projective Hilbert space.  Actually, the problem of signal recovery without using  phase is   one of the  longstanding complications in  the context of  engineering  sciences,  including    signal and image processing, speech recognition systems, quantum theory and X-ray crystallography, where the phase information of the signal is lost. Recently, it   has been   a popular topic in mathematics and   frame theory as well  \cite{Alharbi,AAK1, AAK2, Balan-zhu, Casazza, cahill, Efriam,Van, Wang}. To describe the problem,
consider  a Bessel family of vectors
$\Phi:=\{\phi_{i}\}_{i\in I}$ in   $\mathcal{H}$   and the non-linear mapping
\begin{eqnarray}\label{Phase map}
\alpha_{\Phi}: \mathcal{H}\rightarrow l^{2}(I), \qquad \alpha_{\Phi}(x)=\lbrace \vert\langle x,\phi_{i}\rangle\vert\rbrace_{i\in I}.
\end{eqnarray}
Let us denote by $\hat{\mathcal{H}}=\mathcal{H}/\sim$
 the set of  all  equivalent classes $\hat{x}$ of vectors $x\in \mathcal{H}$  given by
\begin{equation}\label{simrel}
 \hat{x}=\{y\in \mathcal{H}; \quad y=\lambda x;  \quad     \textrm{ for some }    \lambda  \textrm{ with }  \vert \lambda \vert=1 \}.
\end{equation}
Obviously in a real Hilbert space  we have  $\hat{\mathcal{H}}=\mathcal{H}/\{1,-1\}$ and in the complex case $\hat{\mathcal{H}}=\mathcal{H}/\mathbb{T}$, where $\mathbb{T}$ is the complex unit circle. So, the mapping $\alpha_{\Phi}$ can be defined  to $\hat{\mathcal{H}}$ as $\alpha_{\Phi}(\hat{x})=\lbrace \vert\langle x,\phi_{i}\rangle\vert\rbrace_{i\in I}$.
The injectivity of the   mapping $\alpha_{\Phi}$   leads to the reconstruction of every signal  in $\mathcal{H}$ up to a constant phase factor from the modulus   of its frame coefficients.

We denote by  $\tau_{w}$  the coarsest topology
 on $\hat{\mathcal{H}}$ that makes the function family $\{\rho_{y}\}_{y\in  \mathcal{H}}$, defined as $$\rho_{y}(\hat{x})=\vert\langle x,y\rangle\vert ,\quad (\hat{x}\in \hat{\mathcal{H}})$$  continuous. It is worth to mention that $\tau_{w}$ is Hausdorff. Indeed, $\hat{x}=\hat{y}$ if and only if $\vert\langle x,z\rangle\vert=\vert\langle y,z\rangle\vert$, for any $z\in \mathcal{H}$. Moreover we use the notation $\tau_{\pi}$ for the norm-quotient topology on $\hat{\mathcal{H}}$, induced by the quotient map $\pi: \mathcal{H}  \rightarrow \hat{\mathcal{H}}$; $x \mapsto\hat{x}$.

 There are    substantial topological differences between the quotient space $\hat{\mathcal{H}}$ and  the  projective Hilbert space, studied in the context of quantum mechanics, in topological point of view. Indeed, by considering $\mathcal{H}$ as a non-trivial Hilbert space, instead of  $\mathcal{H}^{*}$, one can consider only the unit sphere of  $\mathcal{H}$ and this is a milestone in the study of
the topological structures on  $P(\mathcal{H})$. In fact, with this attitude  $P(\mathcal{H})$ can be identified with the set of the pure quantum states which are the one dimensional orthogonal projections as in $\{P_{y};  \quad\Vert y\Vert =1\}$.
  In particular, it is proved that the non-trivial  projective Hilbert space carries a natural topology and a natural measurable structure, as well \cite{qua1,  qua5, stup.}. In this paper, we consider a separable Hilbert space, and corresponding  to each bounded phase retrieval family of vectors $\Phi=\{\phi_{i}\}_{i\in I}$  in $\mathcal{H}$, define a new metric $d_{\Phi}$ and an initial topology $\tau_{\Phi}$ on the quotient space $\hat{\mathcal{H}}$.   We then study   its  topological   properties and  present a comprehensive   comparison of  the initial topology with the weak topology, the direct-limit topology, the topology induced by the metric  $d_{\Phi}$ and the Bures-Wasserstein distance  on $\hat{\mathcal{H}}$.
Our analysis shows that   the initial topology is strictly weaker than $\tau_{w}$, $\tau_{\pi}$  and the topology induced by  the metrics in the infinite dimensional case. However,  in the finite dimensional case, all these topologies coincide. Our findings provide a deeper understanding of the quotient space and its topological properties.

The contents of the sections of this  paper are as follows. In Section 2 we briefly   present some elementary concepts of frame theory and the phase retrieval problem.
  In Section 3,  we define the metric $d_{\Phi}$ on the quotient space $\hat{\mathcal{H}}$  associated with each phase retrieval frame $\Phi$ of $\mathcal{H}$. We  show that $d_{\Phi}$ is
 weaker than the Bures-Wasserstein distance and they   coincide  in the finite dimensional case.
Section 4 is devoted to a topological perspective on $\hat{\mathcal{H}}$. We introduce an initial  topology $\tau_{\Phi}$ on   $\hat{\mathcal{H}}$ associated to a phase retrieval family $\Phi$  of vectors in $\mathcal{H}$ and  we show  that  such a topology is strictly weaker than $\tau_{w}$ as well as the direct-limit topology.
  In addition, we show that $\tau_{\Phi}$ coincides with $\tau_{w}$  as well as $\tau_{d_{\Phi}}$, whenever $\Phi$ is a  phase retrieval frame  in a finite dimensional Hilbert space.
Finally, we construct a topometric space by using   the triple $(\hat{\mathcal{H}}, \tau_{\Phi},d_{\Phi})$. This    provides a new   point of view  on the properties of the projective Hilbert space.

\section{Preliminaries}
In what follows, we review some basic definitions and results related to frame theory and phase retrieval vectors in   Hilbert spaces. Throughout the paper, we suppose that $\mathcal{H}$ denotes a separable Hilbert space,  and $\mathcal{H}_{n}$ denotes an $n$-dimensional  real or complex Hilbert space,  we use $\mathbb{R}^{n}$ and $\mathbb{C}^{n}$ whenever it is necessary  to distinguish between them.  We also consider the notations; $I_{n}=\{1, 2, ..., n\}$,  $I$ as a  countable index set  and    $\{e_{i}\}_{i\in I_{n}}$ as the standard orthonormal basis of $\mathcal{H}_{n}$. When using two topologies $\tau_{1}$ and $\tau_{2}$ we use ($\tau_{1}\prec\tau_{2}$) $\tau_{1}\preceq\tau_{2}$ to show $\tau_{1}$ is (strictly) weaker than $\tau_{2}$.

A sequence of vectors
$\Phi:=\{\phi_{i}\}_{i\in I}$ in   $\mathcal{H}$ is called a  \textit{frame}  if there exist the constants
 $0<A_{\Phi}\leq B_{\Phi}<\infty$ such that
\begin{eqnarray}\label{Def frame}
A_{\Phi}\|f\|^{2}\leq \sum_{i\in I}\vert \langle f,\phi_{i}\rangle\vert^{2}\leq
B_{\Phi}\|f\|^{2},\qquad (f\in \mathcal{H}).
\end{eqnarray}
The constants $A_{\Phi}$ and $B_{\Phi}$ are called \textit{frame bounds}. The sequence  $\{\phi_{i}\}_{i\in I}$ is said to be  a \textit{Bessel sequence} whenever the right hand side of  (\ref{Def frame})  holds.
 We refer the reader for more detailed  information on frame theory to \cite{Alexeev, AAS,  Bol98, Chr08,     H, Kovaevi}.

In 2006, the  phase recovery problem was formulated in terms of a mathematical language which is closely related to frame theory \cite{Casazza}. In fact, the injectivity of the mapping $\alpha_{\Phi}$ is equivalent to the following definition:

\begin{definition}\cite{Casazza}\label{1}
A family of vectors $\Phi=\{\phi_{i}\}_{i\in I}$ in $\mathcal{H}$ does phase  retrieval if whenever
$f,g\in \mathcal{H}$ satisfy
\begin{eqnarray}\label{121}
\vert \langle f,\phi_{i}\rangle\vert=\vert \langle g,\phi_{i}\rangle\vert, \qquad
 (i\in I)
\end{eqnarray}
then there exists a scalar $\lambda$ with $\vert \lambda\vert =1$ so that $f= \lambda g$.
\end{definition}
Also, in \cite{Casazza}, the authors investigated the injectivity of   $\alpha_{\Phi}$ in the finite dimensional real Hilbert spaces and obtained the least number of  measurements which  suffice for the  injectivity in $n$-dimensional   Hilbert spaces. We first recall the spark of a matrix  is the size of the smallest linearly dependent subset of the columns  \cite{Alexeev}. And the spark of a collection of vectors in a  finite dimensional Hilbert space is considered as the spark of its synthesis matrix, that is a matrix where the frame vectors form the columns. Moreover, a frame $\Phi=\{\phi_{i}\}_{i\in I_{m}}$  in  $\mathcal{H}_{n}$  is called to be full spark whenever $ spark(\Phi)=n+1$.
\begin{proposition}\cite{Casazza}
If  $\Phi=\{\phi_{i}\}_{i\in I_{m}}$  does phase retrieval in $\mathbb{R}^{n}$, then $m\geq 2n-1$. If $m\geq 2n-1$ and $\Phi$ is full spark then $\mathcal{\phi}$  does phase retrieval. Moreover, $\{\phi_{i}\}_{i\in I_{2n-1}}$  does phase retrieval if and only if $\Phi$ is full spark.\end{proposition}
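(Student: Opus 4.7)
My approach would be to first reformulate the real-signal phase retrieval condition as the so-called \emph{complement property} on the index set, and then derive each of the three claims as counting/linear-algebra consequences of that property.

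The key reformulation uses the identity
\[
\lvert\langle f,\phi_i\rangle\rvert=\lvert\langle g,\phi_i\rangle\rvert
\;\Longleftrightarrow\;
\langle f-g,\phi_i\rangle\,\langle f+g,\phi_i\rangle=0,
\]
valid in $\mathbb{R}^n$ since $a^2=b^2\Leftrightarrow(a-b)(a+b)=0$. Writing $u=f-g$, $v=f+g$, condition (\ref{121}) becomes $\langle u,\phi_i\rangle\langle v,\phi_i\rangle=0$ for every $i\in I_m$, and the phase retrieval conclusion $f=\pm g$ is equivalent to $u=0$ or $v=0$. Partitioning $I_m$ as $S=\{i:\langle u,\phi_i\rangle=0\}$ and its complement $S^c=\{i:\langle v,\phi_i\rangle=0\}$, the vanishing of $u$ (resp.\ $v$) is equivalent to $\{\phi_i\}_{i\in S}$ (resp.\ $\{\phi_i\}_{i\in S^c}$) spanning $\mathbb{R}^n$. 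Thus $\Phi$ does phase retrieval if and only if for every $S\subseteq I_m$, either $\{\phi_i\}_{i\in S}$ or $\{\phi_i\}_{i\in S^c}$ spans $\mathbb{R}^n$ — the complement property.

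With this reformulation the three assertions are immediate. For the first, if $m\leq 2n-2$ then one can split $I_m$ into two blocks of size at most $n-1$; neither block can span $\mathbb{R}^n$, violating the complement property, hence $m\geq 2n-1$ is necessary. For the second, assume $m\geq 2n-1$ and $\Phi$ is full spark, so every subset of $n$ of the $\phi_i$'s is linearly independent (and therefore a basis of $\mathbb{R}^n$). For any $S\subseteq I_m$ one has $\lvert S\rvert+\lvert S^c\rvert=m\geq 2n-1$, so at least one of the two sets has cardinality $\geq n$; by full spark it contains a basis of $\mathbb{R}^n$ and the complement property holds. For the third, when $m=2n-1$ one direction is covered by the previous step; for the converse, if $\Phi$ is not full spark there exist $n$ indices $S$ for which $\{\phi_i\}_{i\in S}$ is linearly dependent, hence fails to span $\mathbb{R}^n$, while $\lvert S^c\rvert=n-1<n$ so $\{\phi_i\}_{i\in S^c}$ cannot span $\mathbb{R}^n$ either, breaking the complement property.

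The only step requiring any genuine thought is the reformulation via the factorization $a^2-b^2=(a-b)(a+b)$ and the translation from "$u=0$ or $v=0$" to the spanning dichotomy; once this is in place the three statements reduce to pigeonhole counting on subset sizes together with the definition of full spark. I would therefore structure the proof as a single lemma establishing the equivalence with the complement property, followed by three short paragraphs handling the necessary bound, the sufficiency under full spark, and the sharp equivalence at $m=2n-1$.
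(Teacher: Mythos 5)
Your proposal is correct and follows exactly the route the paper points to: it reduces phase retrieval in $\mathbb{R}^n$ to the complement property (the very characterization from \cite{Casazza} that the paper recalls immediately after this proposition) and then handles the three claims by counting and the definition of full spark, which is the argument of Balan--Casazza--Edidin. The only cosmetic imprecision is the phrase ``the vanishing of $u$ is equivalent to $\{\phi_i\}_{i\in S}$ spanning''---the implication you actually need is that spanning forces $u=0$, together with the converse construction of nonzero $u,v$ orthogonal to the two spans when the complement property fails---but both directions are clearly contained in your translation, so there is no real gap.
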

Recall that two families of frames $\Phi$ and $\Psi$ of $\mathcal{H}$  are called to be equivalent if there exists an invertible operator $U\in B(\mathcal{H})$ so that $\Phi=U\Psi$. It is shown that equivalent frames have the same phase retrieval property.

A family of vectors $\Phi=\{\phi_{i}\}_{i\in I}$ in  $\mathcal{H}$ has the complement property if for every $\sigma \subset I$ either $\overline{span}\{\phi_{i}\}_{i\in \sigma}=\mathcal{H}$ or $\overline{span}\{\phi_{i}\}_{i\in \sigma^{c}}=\mathcal{H}$.
 A main characterization of frames which do phase retrieval was presented for the finite dimensional real case in \cite{Casazza} and then for the infinite dimensional case  in \cite{cahill} which states that,
a family $\Phi=\{\phi_{i}\}_{i\in I}$  in a real Hilbert space $\mathcal{H}$ does phase retrieval if and only if it has the complement property.

\section{The metric structure of the quotient space $\hat{\mathcal{H}}$}
In \cite{Balan-zhu}, the Bures-Wasserstein  metric  has been considered on $ \hat{\mathcal{H}}$ as follows;
\begin{eqnarray}\label{metric}
D(\hat{x},\hat{y})=\min_{\theta}\Vert x-e^{i\theta}y\Vert= \sqrt{\Vert x\Vert^{2}+\Vert y\Vert^{2}-2\vert \langle x, y\rangle\vert}.
\end{eqnarray}
Then, the  bi-Lipschitz bounds  of the mapping $\alpha_{\Phi}$ were investigated by applying this metric.
In the sequel,  we define a new  metric on $\hat{\mathcal{H}}$ that is weaker than $D$. Let $\Phi=\{\phi_{i}\}_{i\in I}$ be a  bounded sequence   in  Hilbert space   $\mathcal{H}$. Then we define  the function
 $d_{\Phi}:\hat{\mathcal{H}} \times \hat{\mathcal{H}} \rightarrow \mathbb{R}$   by
\begin{eqnarray}\label{metricnew}
d_{\Phi}(\hat{x},\hat{y})= \sup_{j\in I}\min_{\theta}\vert \langle x-e^{i\theta}y, \phi_{j}\rangle\vert.
\end{eqnarray}
The next result ensures that $d_{\Phi}$ is  a pseudo metric  on $\hat{\mathcal{H}}$.
\begin{theorem}\label{newmetric}
Let $\Phi=\{\phi_{i}\}_{i\in I}$ be a bounded sequence of vectors  in     $\mathcal{H}$.
Then $d_{\Phi}$ is a pseudo metric on $\hat{\mathcal{H}}$. Moreover, $d_{\Phi}$ is metric if and only if $\Phi$ yields  phase retrieval.
\end{theorem}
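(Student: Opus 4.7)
The plan is to check each pseudo metric axiom in turn and then identify the exact obstruction to $d_\Phi(\hat{x},\hat{y})=0\Longrightarrow \hat{x}=\hat{y}$ as the phase retrieval condition. First I would observe that $d_\Phi$ is well defined on $\hat{\mathcal{H}}\times\hat{\mathcal{H}}$: replacing $x$ by $e^{i\alpha}x$ (resp.\ $y$ by $e^{i\beta}y$) merely reparametrises the inner minimum over $\theta$, so the supremum is unchanged. Boundedness of $\Phi$ guarantees finiteness, since
\[
d_\Phi(\hat{x},\hat{y})\le \sup_{j\in I}\|x-y\|\,\|\phi_j\|\le (\|x\|+\|y\|)\sup_{j\in I}\|\phi_j\|<\infty.
\]
Non-negativity and $d_\Phi(\hat{x},\hat{x})=0$ (take $\theta=0$) are immediate, and symmetry follows from the substitution $\theta\mapsto -\theta$ together with the identity $|\langle x-e^{i\theta}y,\phi_j\rangle|=|\langle y-e^{-i\theta}x,\phi_j\rangle|$.

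The main obstacle is the triangle inequality, because the minimising phase depends on the vector pair and on the index $j$. The plan is to work index-wise: fix $j\in I$ and let $\theta_1,\theta_2$ achieve the minima in $|\langle x-e^{i\theta}y,\phi_j\rangle|$ and $|\langle y-e^{i\theta}z,\phi_j\rangle|$ respectively. Inserting $\pm e^{i\theta_1}y$ and using the ordinary triangle inequality in $\mathbb{C}$ yields
\[
|\langle x-e^{i(\theta_1+\theta_2)}z,\phi_j\rangle|\le |\langle x-e^{i\theta_1}y,\phi_j\rangle|+|\langle y-e^{i\theta_2}z,\phi_j\rangle|,
\]
after factoring out $|e^{i\theta_1}|=1$ from the second term. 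The left-hand side dominates $\min_\theta|\langle x-e^{i\theta}z,\phi_j\rangle|$, and the right-hand side is bounded above by $d_\Phi(\hat{x},\hat{y})+d_\Phi(\hat{y},\hat{z})$. Taking the supremum over $j$ closes the argument.

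For the characterisation, I would argue both directions from the definition. If $\Phi$ does phase retrieval and $d_\Phi(\hat{x},\hat{y})=0$, then for every $j$ one can choose a phase $\theta_j$ with $\langle x,\phi_j\rangle=e^{i\theta_j}\langle y,\phi_j\rangle$; taking moduli gives $|\langle x,\phi_j\rangle|=|\langle y,\phi_j\rangle|$ for all $j$, and Definition \ref{1} forces $\hat{x}=\hat{y}$. Conversely, if $|\langle x,\phi_j\rangle|=|\langle y,\phi_j\rangle|$ for every $j$, then for each $j$ a suitable $\theta_j$ makes $\langle x-e^{i\theta_j}y,\phi_j\rangle=0$, so $d_\Phi(\hat{x},\hat{y})=0$; if $d_\Phi$ is a genuine metric this forces $\hat{x}=\hat{y}$, and phase retrieval for $\Phi$ follows. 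I expect the triangle inequality to be the only non-routine step, the rest being direct consequences of the definitions.
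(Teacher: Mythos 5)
Your proposal is correct and follows essentially the same route as the paper: the triangle inequality is handled index-wise by fixing the minimizing phases, inserting the middle term $\pm e^{i\theta_1}y$, and applying the scalar triangle inequality before taking the supremum, and the metric/phase-retrieval equivalence is reduced in both cases to the observation that $d_\Phi(\hat{x},\hat{y})=0$ iff $\vert\langle x,\phi_j\rangle\vert=\vert\langle y,\phi_j\rangle\vert$ for all $j$. Your write-up of the triangle inequality is in fact a slightly cleaner rendition of the paper's computation, but there is no substantive difference in method.
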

\begin{proof}
Clearly $d_{\Phi}$ is well-defined and symmetric, also if $\hat{x}=\hat{y}$ then there exists $\theta$ so that $x=e^{i\theta}y$ and so $d_{\Phi}(\hat{x},\hat{y})=0$.  Moreover, for every  $ j\in I$ and  $\hat{y},\hat{z}\in \hat{\mathcal{H}}$,  there exists $\alpha_{j}\in [0,2\pi)$ so that
\begin{eqnarray*}
\min_{\alpha}\vert \langle y-e^{i \alpha}z, \phi_{j}\rangle\vert =\vert \langle y-e^{i\alpha_{j}}z, \phi_{j}\rangle\vert, \quad (j\in I).
\end{eqnarray*}
Then,
\begin{eqnarray*}
d_{\Phi}(\hat{x},\hat{z})&=&\sup_{j\in I}\min_{\theta}\vert \langle x-e^{i\theta}z, \phi_{j}\rangle\vert\\
&=& \sup_{j\in I}\min_{\theta}\vert \langle xe^{i\alpha_{j}}-e^{i(\theta+\alpha_{j})}z, \phi_{j}\rangle\vert\\
&=&  \sup_{j\in I}\min_{\theta}\vert \langle xe^{i\alpha_{j}}-e^{i(\theta+\alpha_{j})}z+e^{i\theta}y-e^{i\theta}y, \phi_{j}\rangle\vert\\
&\leq&  \sup_{j\in I}\min_{\theta}\left(\vert \langle x-e^{i(\theta-\alpha_{j})}y, \phi_{j}\rangle\vert+\vert \langle y-e^{i\alpha_{j}}z, \phi_{j}\rangle\vert\right)\\
&\leq& \sup_{j\in I} \min_{\theta}\vert \langle x-e^{i\theta}y, \phi_{j}\rangle\vert+\sup_{j\in I} \min_{\alpha}\vert \langle y-e^{i \alpha}z, \phi_{j}\rangle\vert\\
&=& d_{\Phi}(\hat{x},\hat{y})+d_{\Phi}(\hat{y},\hat{z}),
\end{eqnarray*}
for every $\hat{x},\hat{y},\hat{z}\in \hat{\mathcal{H}}$.
Hence, the mapping $d_{\Phi}$ is a pseudo metric on $\hat{\mathcal{H}}$. For moreover part,   note that   $d(\hat{x},\hat{y})=0$ if and only if for every $j\in I$ there exists $\theta_{j}$ so that $\vert \langle x-e^{i\theta_{j}}y, \phi_{j}\rangle\vert=0$, or equivalently for every $j\in I$ there exists $\theta_{j}$ so that $ \langle x, \phi_{j}\rangle=e^{i\theta_{j}}\langle y, \phi_{j} \rangle$. Thus, $d_{\Phi}(\hat{x},\hat{y})=0$ if and only if $\vert \langle x, \phi_{j}\rangle\vert=\vert \langle y, \phi_{j}\rangle\vert$, for all $j\in I$. Hence, $d_{\Phi}$ is a metric on $\hat{\mathcal{H}}$  if and only if $\Phi=\{\phi_{i}\}_{i\in I}$ yields  phase retrieval.
\end{proof}
 \begin{proposition}\label{flem}
Suppose that $\Phi=\{\phi_{i}\}_{i\in I}$ is  a bounded  sequence  of  phase retrieval vectors  in     $\mathcal{H}$. Then
 $$d_{\Phi}(\hat{x},\hat{y})= \sup_{j\in I}\left\vert \vert \langle x,\phi_{j}\rangle\vert-\vert \langle y,\phi_{j}\rangle\vert\right\vert.$$
\end{proposition}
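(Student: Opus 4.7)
The plan is to notice that the key content of the proposition is a pointwise identity, one summand at a time. For each fixed $j \in I$, I would set $a_j = \langle x, \phi_j\rangle$ and $b_j = \langle y, \phi_j\rangle$ and aim to prove the scalar identity
\begin{equation*}
\min_{\theta} \bigl| a_j - e^{i\theta} b_j \bigr| = \bigl| \, |a_j| - |b_j| \, \bigr|.
\end{equation*}
Once this is established, taking the supremum over $j \in I$ on both sides immediately gives the claimed formula for $d_\Phi(\hat x, \hat y)$ from its definition in \eqref{metricnew}.

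For the scalar identity, the cleanest route is the standard expansion
\begin{equation*}
\bigl| a_j - e^{i\theta} b_j \bigr|^{2} = |a_j|^{2} + |b_j|^{2} - 2\,\mathrm{Re}\bigl( e^{-i\theta} a_j \overline{b_j} \bigr).
\end{equation*}
Writing $a_j \overline{b_j} = |a_j||b_j| e^{i\gamma_j}$, the right-hand side equals $|a_j|^{2} + |b_j|^{2} - 2|a_j||b_j|\cos(\gamma_j - \theta)$, whose minimum over $\theta$ is attained at $\theta = \gamma_j$ and equals $(|a_j| - |b_j|)^{2}$. Taking square roots produces the desired $\bigl| |a_j| - |b_j| \bigr|$. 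I would note that in the real setting the admissible phases are only $\pm 1$, but the same minimization is valid since $\min\{|a_j - b_j|, |a_j + b_j|\} = \bigl| |a_j| - |b_j| \bigr|$ for real scalars, so a single unified argument covers both cases.

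The only mildly delicate point — and the place I would be careful to address explicitly — is that the minimum in \eqref{metricnew} is taken \emph{inside} the supremum, so the optimal phase $\theta_j^{*} = \gamma_j$ depends on $j$. This is precisely why one cannot in general replace $\sup_j \min_\theta$ by $\min_\theta \sup_j$; but since we are merely asserting an equality summand-by-summand, the $j$-dependence of $\theta_j^{*}$ causes no trouble once the scalar identity above is in hand. The phase retrieval hypothesis is not needed for the formula itself — it is only needed (via Theorem \ref{newmetric}) to guarantee that $d_\Phi$ is genuinely a metric rather than a pseudo-metric — so I would not invoke the complement property or any structural feature of $\Phi$ anywhere in the proof; boundedness of $\Phi$ is enough to ensure $d_\Phi$ is well defined.
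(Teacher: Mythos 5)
Your proof is correct and follows essentially the same route as the paper's: expand $\vert a_j - e^{i\theta}b_j\vert^2 = \vert a_j\vert^2 + \vert b_j\vert^2 - 2\,\mathrm{Re}(e^{-i\theta}a_j\overline{b_j})$, optimize the phase term to get $(\vert a_j\vert - \vert b_j\vert)^2$, and pass to the supremum over $j$. Your explicit remarks that the optimal phase may depend on $j$ (harmless here) and that the phase retrieval hypothesis is not actually used in deriving the formula are both accurate and slightly more careful than the paper's presentation.
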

 \begin{proof}
We note that
\begin{eqnarray*}
d_{\Phi}^{2}(\hat{x},\hat{y})&=&\sup_{j\in I}\min_{\theta}\vert \langle x-e^{i\theta}y, \phi_{j}\rangle\vert^{2}\\
&=&\sup_{j\in I}\min_{\theta} \left( \vert \langle x, \phi_{j}\rangle\vert^{2}+\vert \langle  y, \phi_{j}\rangle\vert^{2}-2Re( e^{i\theta}\langle y, \phi_{j}\rangle\langle \phi_{j}, x\rangle)\right)\\
&=&\sup_{j\in I}\left[\vert \langle x, \phi_{j}\rangle\vert^{2}+\vert \langle  y, \phi_{j}\rangle\vert^{2}-2\max_{\theta} \left(Re( e^{i\theta}\langle y, \phi_{j}\rangle\langle \phi_{j}, x\rangle)\right)\right].
\end{eqnarray*}
Assuming $\langle y, \phi_{j}\rangle\langle \phi_{j}, x\rangle=\vert \langle y, \phi_{j}\rangle\langle \phi_{j}, x\rangle\vert e^{i\alpha_{j}}$ for some $\alpha_{j}\in [0,2\pi)$ we get
\begin{eqnarray*}
d_{\Phi}^{2}(\hat{x},\hat{y})
&=&\sup_{j\in I}\left[\vert \langle x, \phi_{j}\rangle\vert^{2}+\vert \langle  y, \phi_{j}\rangle\vert^{2}-2\max_{\theta} \left(Re( e^{i(\alpha_{j}+\theta)}\vert\langle y, \phi_{j}\rangle\langle \phi_{j}, x\rangle\vert)\right)\right]\\
&=&\sup_{j\in I}\left[\vert \langle x, \phi_{j}\rangle\vert^{2}+\vert \langle  y, \phi_{j}\rangle\vert^{2}-2\vert\langle y, \phi_{j}\rangle\vert \vert\langle \phi_{j}, x\rangle\vert \right]\\
&=&\sup_{j\in I}\left(  \vert \langle x,\phi_{j}\rangle\vert-\vert \langle y,\phi_{j}\rangle\vert \right)^{2}.
\end{eqnarray*}
\end{proof}

If $\Phi$ is a phase retrieval Bessel sequence with an upper bound $B$, then
\begin{eqnarray}\label{dweakerD}
d_{\Phi}(\hat{x},\hat{y})&=& \sup_{j\in I}\min_{\theta}\vert \langle x-e^{i\theta}y, \phi_{j}\rangle\vert\\
&\leq& \sqrt{B}\min_{\theta}\Vert x-e^{i\theta}y\Vert=\sqrt{B}D(\hat{x},\hat{y}).
\end{eqnarray}
That is,  $D$ dominates  $d_{\Phi}$.
Now,
 suppose that $\Phi=\{\phi_{i}\}_{i\in I}$  is a bounded sequence  in  Hilbert space $\mathcal{H}$.
Define
\begin{eqnarray*}
\mathfrak{D}(\hat{x},\hat{y})=\min_{\theta}\sup_{j\in I}\vert \langle x-e^{i\theta}y, \phi_{j}\rangle\vert.
\end{eqnarray*}
 The mapping $\mathfrak{D}$ is well-defined due to the mapping $f:[0,2\pi] \rightarrow \mathbb{R}$ defined by $f(\theta)=
 \sup_{j\in I}\vert \langle x-e^{i\theta}y, \phi_{j}\rangle\vert$ is lower semi continuous. In addition, lower semi continuous functions attains their  minimum on compact spaces  \cite{Laffer}, so there exists $\theta_{0}$ so that $\mathfrak{D}(\hat{x},\hat{y})=\sup_{j\in I}\vert \langle x-e^{i\theta_{0}}y, \phi_{j}\rangle\vert$.
  Moreover, $\mathfrak{D}$ defines a pseudo-metric on $\hat{\mathcal{H}}$. And, if $\Phi$ is a phase retrieval frame  an analogous  approach to the proof of Theorem \ref{newmetric} shows that $(\hat{\mathcal{H}}, \mathfrak{D})$ is a metric space.

 Assume that $\Phi=\{\phi_{i}\}_{i\in I}$  is a frame with frame bounds $A$ and $B$, respectively and $x,y\in \mathcal{H}_{n}$, then $\mathfrak{D}(\hat{x},\hat{y})\leq  \sqrt{B}D(\hat{x},\hat{y})$ . Furthermore, in finite dimensional case, $\mathfrak{D}$ is equivalent to $D$. Indeed,   considering $\Phi=\{\phi_{i}\}_{i\in I_{m}}$ as a frame in  $\mathcal{H}$ and $\theta_{0}=\arg  \min \max_{j\in I_{m}}\vert \langle x-e^{i\theta}y, \phi_{j}\rangle\vert$
we get
\begin{eqnarray*}
D(\hat{x},\hat{y}) &=&\min_{\theta}\Vert  x-e^{i\theta}y \Vert\\
&\leq&
\Vert  x-e^{i\theta_{0}}y \Vert\\
&\leq& \dfrac{1}{\sqrt{A}} \left(\sum_{j\in I_{m}}\vert \langle x-e^{i\theta_{0}}y, \phi_{j}\rangle\vert^{2}\right)^{1/2}\\
&\leq&\sqrt{\dfrac{m}{A}} \max_{j\in I_{m}}\vert \langle x-e^{i\theta_{0}}y, \phi_{j}\rangle\vert\\
&\leq&\sqrt{\dfrac{m}{A}}  \mathfrak{D}(\hat{x},\hat{y}).
\end{eqnarray*}
In the next example we compute $D$, $d_{\Phi}$ and $\mathfrak{D}$ for a phase retrieval frame in $\mathbb{C}^{2}$.
\begin{example}
Consider the following phase retrieval frame for $\mathbb{C}^{2}$
\begin{eqnarray*}
\Phi=\{(1,0), (0,1), (1,1), (1,i)\}.
\end{eqnarray*}
Fix $n ,m\in \mathbb{N}$ and take $x=(n,0)$, $y=(0,m)$. Then we obtain
\begin{eqnarray*}
D(\hat{x},\hat{y})= \min_{\theta}\Vert x-e^{i\theta}y\Vert=\min_{\theta}\Vert (n, -me^{i\theta})\Vert=\sqrt{n^{2}+m^{2}}.
\end{eqnarray*}
On the other hand,
\begin{eqnarray*}
d_{\Phi}(\hat{x},\hat{y})&=& \max_{1\leq j\leq 4}\min_{\theta}\vert \langle x-e^{i\theta}y, \phi_{j}\rangle\vert \\
&=& \max_{1\leq j\leq 4}\min_{\theta}\vert \langle (n, -me^{i\theta}), \phi_{j}\rangle\vert \\
&=&\max\{n, m, \vert m-n\vert, \vert m-n\vert\}\\
&=&\max\{n, m\}.
\end{eqnarray*}
Hence, we get
\begin{eqnarray*}
D^{2}(\hat{x},\hat{y})=m^{2}+n^{2}  \leq 2d_{\Phi}^{2}(\hat{x},\hat{y}).
\end{eqnarray*}
Furthermore,
\begin{eqnarray*}
\mathfrak{D}(\hat{x},\hat{y})=\min_{\theta}\max\left\{n,m,\sqrt{n^{2}+m^{2}-2nm cos\theta},\sqrt{n^{2}+m^{2}+2nmsin\theta}\right\}.
\end{eqnarray*}
So,
\begin{eqnarray*}
\mathfrak{D}(\hat{x},\hat{y}) =\min_{\theta}\begin{cases}
\begin{array}{ccc}
\sqrt{n^{2}+m^{2}+2nm sin\theta}& \;
{-sin^{-1}(n/2m)\leq \theta \leq \dfrac{3\pi}{4}}, \\
\sqrt{n^{2}+m^{2}-2nm cos\theta}& \; {\dfrac{3\pi}{4}}\leq \theta \leq \dfrac{3\pi}{2}, 0\leq cos\theta \leq \dfrac{n}{2m}, \\
\max\{n,m\}
& \; {\textit{otherwise}}.
\end{array}
\end{cases}
\end{eqnarray*}
 Thus,   $\mathfrak{D}(\hat{x},\hat{y})= \max\{n,m\} =d_{\Phi}(\hat{x},\hat{y})$.
\end{example}
As the final result  of this section we prove that in the finite dimensional case the quotient  space $\hat{\mathcal{H}}$ is  a  complete metric space.
  \begin{theorem}\label{closerange}
Let $\Phi=\{ \phi_{i}\}_{i\in I_{m}}$ does  phase retrieval  frame  in     $\mathcal{H}_{n}$. Then, $(\hat{\mathcal{H}}_{n},d_{\Phi})$ is a complete metric space.
\end{theorem}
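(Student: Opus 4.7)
The plan is to invoke Bolzano--Weierstrass, exploiting two consequences of finite-dimensionality: the index set $I_{m}$ is finite, so the $\sup$ in the definition of $d_{\Phi}$ is an attained $\max$; and closed bounded sets in $\mathcal{H}_{n}$ are compact. By Proposition~\ref{flem}, we may rewrite
\[
d_{\Phi}(\hat{x},\hat{y}) = \max_{j\in I_{m}}\bigl||\langle x,\phi_{j}\rangle|-|\langle y,\phi_{j}\rangle|\bigr|,
\]
so $d_{\Phi}$ is the pullback of the $\ell^{\infty}$-distance on $\mathbb{R}^{m}$ along the nonlinear map $\alpha_{\Phi}$.

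Given a $d_{\Phi}$-Cauchy sequence $\{\hat{x}_{k}\}$ in $\hat{\mathcal{H}}_{n}$, I would choose arbitrary representatives $x_{k}\in\mathcal{H}_{n}$. The Cauchy property implies that $\max_{j\in I_{m}}|\langle x_{k},\phi_{j}\rangle|$ is bounded in $k$, and the frame lower bound gives
\[
A\,\|x_{k}\|^{2}\leq\sum_{j\in I_{m}}|\langle x_{k},\phi_{j}\rangle|^{2}\leq m\,\Bigl(\max_{j\in I_{m}}|\langle x_{k},\phi_{j}\rangle|\Bigr)^{2},
\]
so $\{\|x_{k}\|\}$ is bounded. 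By Bolzano--Weierstrass in $\mathcal{H}_{n}$, some subsequence $x_{k_{l}}$ converges in norm to a vector $x\in\mathcal{H}_{n}$. Continuity of $y\mapsto|\langle y,\phi_{j}\rangle|$ for each $j\in I_{m}$ and the fact that the max runs over the \emph{finite} set $I_{m}$ then yield $d_{\Phi}(\hat{x}_{k_{l}},\hat{x})\to 0$.

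Finally, the standard fact that a Cauchy sequence with a convergent subsequence converges to the same limit (a purely metric argument using the triangle inequality for $d_{\Phi}$ established in Theorem~\ref{newmetric}) upgrades this to $d_{\Phi}(\hat{x}_{k},\hat{x})\to 0$ for the full sequence, giving completeness. There is no real obstacle here: the only ingredients beyond routine metric-space arguments are the reformulation of $d_{\Phi}$ as a finite maximum of moduli of frame coefficients and the conversion of $d_{\Phi}$-boundedness into norm-boundedness via the frame lower bound. Both steps visibly break down in infinite dimensions (the sup ceases to be attained, and the frame bound no longer dominates $\|x_{k}\|$ by the maximum coefficient), which explains why the result is restricted to $\mathcal{H}_{n}$.
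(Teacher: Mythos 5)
Your proof is correct, but it takes a genuinely different route from the paper's. The paper reduces completeness to the claim that $\alpha_{\Phi}$ has closed range: given the coordinatewise limits $a_{j}=\lim_{k}|\langle x_{k},\phi_{j}\rangle|$, it writes $\langle x_{k},\phi_{j}\rangle=|\langle x_{k},\phi_{j}\rangle|e^{i\theta_{j,k}}$, passes to a subsequence along which the phases converge, and reconstructs a candidate limit $x=\sum_{j}a_{j}e^{i\theta_{j}}S_{\Phi}^{-1}\phi_{j}$ by means of the inverse frame operator, finally verifying $|\langle x,\phi_{i}\rangle|=a_{i}$ for all $i$. You instead work upstream in $\mathcal{H}_{n}$ itself: the frame lower bound converts $d_{\Phi}$-boundedness of the Cauchy sequence into norm-boundedness of the chosen representatives, Bolzano--Weierstrass produces a norm-convergent subsequence, and continuity of the finitely many coefficient maps together with the standard ``Cauchy plus convergent subsequence'' argument finishes the proof. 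Your version is more elementary --- it avoids both the phase bookkeeping and the frame reconstruction formula, and incidentally sidesteps the simultaneous extraction of convergent phase subsequences for all $j$ that the paper's argument needs --- while the paper's route has the merit of establishing that $\alpha_{\Phi}$ has closed range, a fact of independent interest. Your closing remark correctly isolates the two points where finite-dimensionality enters (attainment of the supremum and domination of $\|x_{k}\|$ by the maximal coefficient via the lower frame bound) and why both fail in infinite dimensions.
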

\begin{proof}
It is sufficient to show that the mapping $\alpha_{\Phi}$ is closed range.
Assume that $\{\hat{x}_{n}\}_{n}$ is a Cauchy sequence in $\hat{\mathcal{H}}$, then by Proposition \ref{flem}  the sequence $\{\vert \langle x_{n}, \phi_{j}\rangle \vert\}_{n}$ is a Cauchy sequence in $\mathbb{R}$, for all $j\in I$. So
there exist  positive elements $a_{j}$, $j\in I$  so that $\lim_{n\rightarrow \infty}\vert \langle x_{n}, \phi_{j}\rangle \vert=a_{j}$.
Due to the sequence $\{\vert \langle x_{n}, \phi_{j}\rangle \vert\}_{j\in I}\in R(\alpha_{\Phi})$, for every $n$     there exists   $\hat{x}\in \hat{\mathcal{H}}$ so that $$\lim_{n\rightarrow \infty}\vert \langle x_{n}, \phi_{j}\rangle \vert=\vert \langle x, \phi_{j}\rangle \vert,$$  for all $j\in I$. That means, $\lim_{n\rightarrow \infty}d_{\Phi}(\hat{x}_{n},\hat{x} )=0$.

 Suppose now that $\{\hat{x}_{n}\}_{n}$ is a  sequence in $\hat{\mathcal{H}}$ so that $\lim_{n\rightarrow \infty}\{\vert \langle x_{n}, \phi_{j}\rangle \vert\}_{j\in I}=\{a_{j}\}_{j\in I}$, where
$\{a_{j}\}_{j\in I}$ is a sequence of positive numbers. Hence, we get  $$\lim_{n\rightarrow \infty}\vert \langle x_{n}, \phi_{j}\rangle \vert=a_{j} , \quad (j\in I).$$ On the other hand, there exist $\{\theta_{j,n}\}_{n}\in [0,2\pi)$ such  that  $$\langle x_{n},\phi_{j}\rangle=\vert\langle x_{n},\phi_{j}\rangle \vert e^{i\theta_{j,n}}, \quad (n\in \mathbb{N}).$$
The sequence $\{\theta_{j,n}\}_{n}$ has a convergent subsequence in $[0,2\pi)$ and so  we may assume that there exists $\theta_{j}$ so that $\lim_{k\rightarrow \infty}  \langle x_{n_{k}}, \phi_{j}\rangle =a_{j}e^{i\theta_{j}}$. Consider $x=\sum_{j\in I}a_{j}e^{i\theta_{j}}S_{\phi}^{-1}\phi_{j}$. Then we get
\begin{eqnarray*}
\lim_{n\rightarrow \infty}\vert \langle x_{n}, \phi_{i}\rangle \vert&=&\lim_{k\rightarrow \infty}\vert \langle x_{n_{k}}, \phi_{i}\rangle \vert\\&=&
\left\vert \lim_{k\rightarrow \infty}\left\langle \sum_{j\in I}\langle x_{n_{k}}, \phi_{j} \rangle S_{\phi}^{-1}\phi_{j}, \phi_{i}\right\rangle \right\vert\\
&=&\left\vert \left\langle \sum_{j\in I}\lim_{k\rightarrow \infty}  \langle  x_{n_{k}}, \phi_{j}\rangle S_{\phi}^{-1}\phi_{j}, \phi_{i}\right\rangle \right\vert\\
&=&\left\vert\left \langle \sum_{j\in I} a_{j}  e^{i\theta_{j}} S_{\phi}^{-1}\phi_{j}, \phi_{i}\right\rangle \right\vert\\
&=&\vert \langle x, \phi_{i}\rangle \vert.
\end{eqnarray*}
This follows the desired result.
\end{proof}
 The quotient space $ \hat{\mathcal{H}}$  equipped with the metric $d_\Phi$ is generally incomplete in infinite dimensions. We demonstrate this by constructing a Cauchy sequence that does not converge to any element in $ \hat{\mathcal{H}}$.
\begin{example}
Assume that  $\mathcal{H} = \ell^2(\mathbb{N})$ and $\Phi = \{e_j\}_{j=1}^\infty$ is  the standard orthonormal basis. Consider the sequence $x_n = \sum_{k=1}^n \frac{1}{\sqrt{k}} e_k$. Then $\{\hat{x}_n\}$ is a Cauchy sequence in $(\hat{\mathcal{H}}, d_\Phi)$ but does not converge to any $\hat{x} \in \hat{\mathcal{H}}$. More precisely, $$d_\Phi(\hat{x}_n, \hat{x}_m) = \sup_{j \in \mathbb{N}} \left| |\langle x_n, e_j \rangle| - |\langle x_m, e_j \rangle| \right|\sup_{n < j \le m} \frac{1}{\sqrt{j}} = \frac{1}{\sqrt{n+1}},$$
since $\langle x_n, e_j \rangle = 1/\sqrt{j}$ for $j \le n$ and $0$ otherwise.
As $n \to \infty$, this distance approaches $0$, so $\{\hat{x}_n\}$ is Cauchy.
If $\hat{x}_n \to \hat{x}$ for some $x \in \ell^2(\mathbb{N})$, then for each fixed $j$:
$$ |\langle x, e_j \rangle| = \lim_{n \to \infty} |\langle x_n, e_j \rangle| = \frac{1}{\sqrt{j}}.$$
However, $\sum_{j=1}^\infty |\langle x, e_j \rangle|^2 = \sum_{j=1}^\infty \frac{1}{j}$ diverges. Thus, no such $x \in \ell^2(\mathbb{N})$ exists, proving the space is incomplete.
\end{example}
\section{Topological  point of view}
In  this section, we define an initial topology $\tau_{\Phi}$ on the quotient space $\hat{\mathcal{H}}$ associated to a phase retrieval family of vectors in $\mathcal{H}$. The main goal  is to investigate the topological structure of $\hat{\mathcal{H}}$ and to  make a  comparison between  $\tau_{\Phi}$ and some known topologies on $\hat{\mathcal{H}}$.   In particular, we prove that $\hat{\mathcal{H}}$ is paracompact, and
$\tau_{\Phi}$ coincides with $\tau_{w}$   in the finite dimensional case if and only if $\Phi$ does phase retrieval.

Let    $\Phi=\{\phi_{i}\}_{i\in I}$ be a sequence  in   Hilbert space $\mathcal{H}$. Then the family $\{\rho_{\phi_{i}}\}_{i\in I}$, where $\rho_{\phi_{i}}: \hat{\mathcal{H}} \rightarrow \mathbb{R}$, defined by $\rho_{\phi_{i}}\hat{x}=\vert\langle x,\phi_{i}\rangle\vert$, induces an initial topology $\tau_{\Phi}$  on    $ \hat{\mathcal{H}}$. This topology is  the coarsest topology on
$ \hat{\mathcal{H}}$ that makes the function family $\{\rho_{\phi_{i}}\}_{i\in I}$ continuous with the subbases $\{\rho^{-1}_{\phi_{i}}(r_{1}, r_{2}) : r_{1}, r_{2}\in \mathbb{R}^{+},i\in I\}$. Studying the structure of $\tau_{\Phi}$ is the main aim of this section. We first note that in the case $\Phi=\{\phi_{i}\}_{i\in I}$ does  phase retrieval   in     $\mathcal{H}$, the space  $(\hat{\mathcal{H}}, \tau_{\Phi})$ is $T_{1}$. Indeed,  it is sufficient to show that $\{\hat{x}\}^{c}$ is open for every $\hat{x}\in \hat{\mathcal{H}}$. Consider $\hat{y}\in \{\hat{x}\}^{c}$ then $\hat{x}\neq \hat{y}$ and so  there exists $i\in I$ such that $\vert \langle x,\phi_{i}\rangle\vert\neq\vert \langle y,\phi_{i}\rangle\vert$,  we may assume that
\begin{eqnarray*}
\vert \langle y,\phi_{i}\rangle\vert < \vert \langle x,\phi_{i}\rangle\vert.
\end{eqnarray*}
Hence, $\hat{y}\in \rho^{-1}_{\phi_{i}}(0, \vert \langle x,\phi_{i}\rangle\vert)\subseteq \{\hat{x}\}^{c}$, i.e., $\{\hat{x}\}^{c}$ is an open set in $\hat{\mathcal{H}}$.

 As   mentioned above, a fundamental  classification of phase retrieval frames  in real Hilbert spaces in based on  the complement property. The following lemma  presents a new characterization of the phase retrieval property in both real and complex cases. Indeed,    we show that the phase retrieval property of $\Phi$ is equivalent to $(\hat{\mathcal{H}}, \tau_{\Phi})$ being a Hausdorff space.  While the argument is simple, it provides fresh insight into the problem.
\begin{proposition}\label{Haus}
$\Phi=\{\phi_{i}\}_{i\in I}$ does  phase retrieval   in     $\mathcal{H}$ if and only if  $(\hat{\mathcal{H}}, \tau_{\Phi})$ is Hausdorff.
\end{proposition}
\begin{proof}
Notice that   $(\hat{\mathcal{H}}, \tau_{\Phi})$ is Hausdorff  if and only if $\{\rho_{\phi_{i}}\}_{i\in I}$ separates the points of $\hat{\mathcal{H}}$ that is   equivalent to $\Phi$ yields  phase retrieval.
\end{proof}

\begin{lemma}\label{converges}
Let $\Phi=\{\phi_{i}\}_{i\in I}$ yields  phase retrieval   in     $\mathcal{H}$, $x\in \mathcal{H}$ and $\{\hat{x}_{n}\}_{n=1}^{\infty}\subseteq \hat{\mathcal{H}}$. Then  the followings hold;
\begin{itemize}
\item[(i)]  $\lim_{n} \hat{x}_{n}=\hat{x}$ in $\tau_{w}$   if and only if  $\lim_{n}\vert \langle x_{n},y\rangle\vert =\vert \langle  x,y \rangle\vert$, for all $y\in \mathcal{H}$.
\item[(ii)] $ \lim_{n} \hat{x}_{n}=\hat{x}$ in $\tau_{\Phi}$   if and only if  $\lim_{n}\vert \langle x_{n},\phi_{i}\rangle\vert =\vert \langle  x,\phi_{i}\rangle\vert$, for all $i\in I$.
\end{itemize}
\end{lemma}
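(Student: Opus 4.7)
The plan is to recognize that both $\tau_{w}$ and $\tau_{\Phi}$ are by construction initial topologies with respect to explicit families of real-valued maps, so the lemma reduces in all three parts to the standard characterization of convergence in an initial topology, namely that $\hat{x}_{n}\to\hat{x}$ if and only if $f(\hat{x}_{n})\to f(\hat{x})$ for each generator $f$. Parts (i) and (ii) have exactly the same structure, so I would write (i) carefully and just indicate that (ii) follows verbatim with $\{\rho_{\phi_{i}}\}_{i\in I}$ in place of $\{\rho_{y}\}_{y\in\mathcal{H}}$.

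For the forward direction of (i), I would invoke the definition of $\tau_{w}$: each $\rho_{y}$ is continuous, so $\hat{x}_{n}\to\hat{x}$ in $\tau_{w}$ forces $|\langle x_{n},y\rangle|=\rho_{y}(\hat{x}_{n})\to\rho_{y}(\hat{x})=|\langle x,y\rangle|$ for every $y\in\mathcal{H}$. For the converse, I would start from a basic open neighborhood of $\hat{x}$ in $\tau_{w}$, which by the subbasis description can be written as a finite intersection
\[
U=\bigcap_{k=1}^{K}\rho_{y_{k}}^{-1}(r_{k}^{(1)},r_{k}^{(2)}),
\]
with $r_{k}^{(1)}<|\langle x,y_{k}\rangle|<r_{k}^{(2)}$ for each $k$. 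The pointwise hypothesis $|\langle x_{n},y_{k}\rangle|\to|\langle x,y_{k}\rangle|$ yields $N_{k}$ with $|\langle x_{n},y_{k}\rangle|\in (r_{k}^{(1)},r_{k}^{(2)})$ for $n\ge N_{k}$; taking $N=\max_{k}N_{k}$ places $\hat{x}_{n}$ in $U$ for all $n\ge N$, so $\hat{x}_{n}\to\hat{x}$ in $\tau_{w}$. The same argument with $y_{k}$ replaced by $\phi_{i_{k}}$ establishes (ii).

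For (iii), I would simply observe that the generating family $\{\rho_{\phi_{i}}\}_{i\in I}$ of $\tau_{\Phi}$ is a subfamily of the generating family $\{\rho_{y}\}_{y\in\mathcal{H}}$ of $\tau_{w}$; consequently the subbasis of $\tau_{\Phi}$ is contained in the subbasis of $\tau_{w}$, whence $\tau_{\Phi}\subseteq\tau_{w}$, i.e.\ $\tau_{\Phi}\preceq\tau_{w}$. I do not foresee any real obstacle in this lemma; the only point requiring minor care is the passage from subbasic to basic open sets in the ``if'' direction of (i) and (ii), which is handled by the finite-intersection argument above. The phase retrieval hypothesis is not actually used to prove convergence equivalences or the comparison of topologies, but it ensures (via Lemma \ref{Haus}) that limits in $\tau_{\Phi}$ are unique, which is what justifies writing $\lim_{n}\hat{x}_{n}=\hat{x}$.
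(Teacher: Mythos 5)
Your proof is correct and, for parts (i) and (ii), takes the same route as the paper: the paper simply cites the standard characterization of convergence in an initial topology (from Megginson), whereas you spell out both directions, including the finite-intersection passage from subbasic to basic neighborhoods. The one place you genuinely diverge is part (iii): the paper deduces $\tau_{\Phi}\preceq\tau_{w}$ ``immediately from (i) and (ii),'' i.e.\ from the comparison of convergent sequences, while you argue directly that the generating family $\{\rho_{\phi_{i}}\}_{i\in I}$ is a subfamily of $\{\rho_{y}\}_{y\in\mathcal{H}}$, so every subbasic (hence every) open set of $\tau_{\Phi}$ is open in $\tau_{w}$. Your version is actually the safer one: inferring a containment of topologies from a containment of the classes of convergent sequences is only valid in sequential spaces, and $\tau_{w}$ is generated by an uncountable family, so first countability is not obvious; the subbasis argument sidesteps this entirely. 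Your closing remark that phase retrieval is not needed for the equivalences themselves but only to guarantee uniqueness of limits (via Hausdorffness of $\tau_{\Phi}$) is also accurate and worth keeping.
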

\begin{proof}
$(i)$ follows from a  known result in the context of topology induced by a family of functions, see for instance \cite{book1}.
To obtain $(ii)$,
it is sufficient to note that $\tau_{\Phi}$ is indeed the  topology induced by the family of functions $\{\rho_{\phi_{i}}\}_{i\in I}$, where $\rho_{\phi_{i}}: \hat{\mathcal{H}} \rightarrow \mathbb{R}$, defined by $\rho_{\phi_{i}}\hat{x}=\vert\langle x,\phi_{i}\rangle\vert$.
\end{proof}
\begin{lemma}\label{finer}
Let   $\mathcal{H}$ be an infinite dimensional Hilbert space and $\Phi=\{\phi_{i}\}_{i\in I}$ be a bounded sequence  of vectors   in     $\mathcal{H}$, which  does  phase retrieval.
Then the topology induced by the metric $d_{\Phi}$   on $\hat{\mathcal{H}}$ is finer than $\tau_{\Phi}$.
\end{lemma}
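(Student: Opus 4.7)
The goal is to establish $\tau_\Phi \preceq \tau_{d_\Phi}$, where $\tau_{d_\Phi}$ is the topology on $\hat{\mathcal{H}}$ induced by the metric $d_\Phi$. My strategy exploits the fact that $\tau_\Phi$ is defined as an initial topology: by the universal property of initial topologies, it suffices to show that each subbase map $\rho_{\phi_i}\colon \hat{\mathcal{H}}\to\mathbb{R}$ is continuous with respect to $\tau_{d_\Phi}$, for then the topology $\tau_\Phi$, being the coarsest topology making this family continuous, must be weaker than $\tau_{d_\Phi}$.

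The plan is, first, to rewrite $d_\Phi$ in its closed form using Proposition \ref{flem}: since $\Phi$ does phase retrieval, we have
\[
d_\Phi(\hat{x},\hat{y}) \;=\; \sup_{j\in I}\bigl||\langle x,\phi_j\rangle|-|\langle y,\phi_j\rangle|\bigr|.
\]
Second, for a fixed index $i\in I$, I would bound the oscillation of $\rho_{\phi_i}$ by the single coordinate at $j=i$ appearing in the supremum,
\[
|\rho_{\phi_i}(\hat{x})-\rho_{\phi_i}(\hat{y})| \;=\; \bigl||\langle x,\phi_i\rangle|-|\langle y,\phi_i\rangle|\bigr| \;\leq\; \sup_{j\in I}\bigl||\langle x,\phi_j\rangle|-|\langle y,\phi_j\rangle|\bigr| \;=\; d_\Phi(\hat{x},\hat{y}),
\]
which shows that $\rho_{\phi_i}$ is $1$-Lipschitz with respect to $d_\Phi$, hence continuous with respect to $\tau_{d_\Phi}$. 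Third, I would invoke the universal property: the preimages $\rho_{\phi_i}^{-1}(U)$ for $U\subseteq\mathbb{R}$ open generate a subbase for $\tau_\Phi$, and all of these preimages are open in $\tau_{d_\Phi}$, so every $\tau_\Phi$-open set is $\tau_{d_\Phi}$-open, i.e., $\tau_\Phi\preceq\tau_{d_\Phi}$.

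There is no serious obstacle in this direction; the argument is essentially a verification. The boundedness of $\Phi$ and the phase retrieval property are used only to guarantee that $d_\Phi$ takes finite values and that Proposition \ref{flem} applies, so that the single Lipschitz estimate above is the entire content of the proof. I note that the hypothesis of infinite-dimensionality is not needed for this inclusion itself; it presumably appears in the statement because the authors intend the comparison to be paired with the strict separation result, i.e., the reverse inclusion $\tau_{d_\Phi}\preceq\tau_\Phi$ fails in infinite dimensions (which would be shown, for instance, by testing convergence of an orthonormal sequence $\hat{e}_n$ coordinatewise against the $\phi_i$'s), whereas in finite dimensions the two topologies coincide.
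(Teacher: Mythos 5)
Your proof is correct and rests on the same key estimate as the paper's, namely that Proposition \ref{flem} yields $\vert\rho_{\phi_i}(\hat{x})-\rho_{\phi_i}(\hat{y})\vert\leq d_{\Phi}(\hat{x},\hat{y})$ for each fixed $i\in I$. The paper packages this through sequential convergence (a $d_{\Phi}$-convergent sequence converges coordinatewise, hence converges in $\tau_{\Phi}$ by Lemma \ref{converges}), whereas you invoke the universal property of the initial topology directly; the two routes are interchangeable here because $d_{\Phi}$ is a metric and hence its topology is determined by sequences, and your remark that infinite-dimensionality plays no role in this inclusion is also accurate.
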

\begin{proof}
Suppose that
 $\{\hat{x}_{n}\}_{n=1}^{\infty}$ is  a sequence which
  converges to $\hat{x}$  in $(\hat{\mathcal{H}}, d_{\Phi})$. Then $$\lim_{n}\sup_{j\in I}\left\vert \vert \langle x_{n},\phi_{j}\rangle\vert-\vert \langle x,\phi_{j}\rangle\vert\right\vert=0,$$  by Proposition  \ref{flem}. Hence,
 $\lim_{n}\left\vert \vert \langle x_{n},\phi_{j}\rangle\vert-\vert \langle x,\phi_{j}\rangle\vert\right\vert=0$, for all $j\in I$. Thus by Lemma \ref{converges} we get the desired result.
\end{proof}
Applying Lemma \ref{finer} along with $(3.4)$ follows that
\begin{eqnarray}\label{comp}
\tau_{\Phi} \preceq  \tau_{d_{\Phi}} \preceq  \tau_{D}\cong
\tau_{\pi}.
\end{eqnarray}
 Also, by using Proposition  \ref{flem} and Lemma \ref{converges} we easily obtain  the following result;
\begin{corollary}
Let $\Phi=\{\phi_{i}\}_{i\in I_{m}}$ be a phase retrieval frame  in     $\mathcal{H}_{n}$.
Then the topology induced by metric $d_{\Phi}$ and   $\tau_{\Phi}$ are equivalent    on $\hat{\mathcal{H}}_{n}$.
\end{corollary}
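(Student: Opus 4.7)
The plan is to establish the remaining inclusion $\tau_{d_{\Phi}} \preceq \tau_{\Phi}$, since Lemma \ref{finer} (or equivalently the chain of inclusions in $(3.5)$) already gives $\tau_{\Phi} \preceq \tau_{d_{\Phi}}$ in any dimension. The key feature of the finite-dimensional setting I intend to exploit is that the index set $I_{m}$ is finite, so that suprema over $I_{m}$ become maxima of finitely many continuous functions.

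First I would invoke Proposition \ref{flem} to rewrite the metric as
\begin{eqnarray*}
d_{\Phi}(\hat{x},\hat{y})=\max_{j\in I_{m}}\bigl|\rho_{\phi_{j}}(\hat{x})-\rho_{\phi_{j}}(\hat{y})\bigr|,
\end{eqnarray*}
where each $\rho_{\phi_{j}}:\hat{\mathcal{H}}_{n}\to\mathbb{R}$ is continuous with respect to $\tau_{\Phi}$ by the very definition of the initial topology. Since a finite maximum of continuous real-valued functions is continuous, for every fixed $\hat{y}\in\hat{\mathcal{H}}_{n}$ the map $\hat{x}\mapsto d_{\Phi}(\hat{x},\hat{y})$ is $\tau_{\Phi}$-continuous.

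Next, I would use this continuity to show that every open ball
\begin{eqnarray*}
B_{d_{\Phi}}(\hat{y},r)=\{\hat{x}\in\hat{\mathcal{H}}_{n}:d_{\Phi}(\hat{x},\hat{y})<r\}
\end{eqnarray*}
is the preimage of $(-\infty,r)$ under a $\tau_{\Phi}$-continuous function, hence lies in $\tau_{\Phi}$. Since such balls form a base for $\tau_{d_{\Phi}}$, this yields $\tau_{d_{\Phi}}\preceq\tau_{\Phi}$. Combining with the reverse inclusion from Lemma \ref{finer} gives the desired equality $\tau_{\Phi}=\tau_{d_{\Phi}}$.

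I do not expect any real obstacle here; the whole argument rests on the fact that finite-dimensionality turns the defining supremum of $d_{\Phi}$ into a finite maximum, which is exactly what allows the metric to be read off from finitely many subbasic continuous functions of $\tau_{\Phi}$. The only mild subtlety worth flagging is that phase retrieval is implicitly used twice: once so that $d_{\Phi}$ is a genuine metric (via Theorem \ref{newmetric}) and once so that Proposition \ref{flem} applies; both are granted by the hypothesis.
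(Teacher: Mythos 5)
Your argument is correct and follows essentially the same route as the paper, which derives the corollary directly from Proposition \ref{flem}: finiteness of $I_{m}$ turns the supremum into a maximum of finitely many $\tau_{\Phi}$-continuous functions $\rho_{\phi_{j}}$, giving $\tau_{d_{\Phi}}\preceq\tau_{\Phi}$, while the reverse inclusion is Lemma \ref{finer} (whose proof, as you note, does not actually need infinite-dimensionality). Your explicit verification via open balls merely fleshes out what the paper leaves implicit.
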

\begin{proof}
Suppose  the sequence $\{\hat{x}_{k}\}_{k=1}^{\infty}$
  converges to $\hat{x}$  in $(\hat{\mathcal{H}_{n}}, d_{\Phi})$, then using Proposition  \ref{flem} and  Lemma \ref{converges} (ii) we deduce that $\{\hat{x}_{k}\}_{n=1}^{\infty}$
  converges to $\hat{x}$  in $(\hat{\mathcal{H}_{n}}, \tau_{\Phi})$. This along with    $(\ref{comp})$ follows the result.
\end{proof}
In the sequel, we would like to  compare the topologies $\tau_{w}$ and $\tau_{\Phi}$  on $ \hat{\mathcal{H}}$. We first
  presents a description of  $\tau_{w}$   on $ \hat{\mathcal{H}}$. The proof is straightforward and is  left to the reader.
\begin{proposition}\label{Haus10}
Let $\mathcal{H}$ be a separable Hilbert space. The topology $\tau_{w}$  on  $ \hat{\mathcal{H}}$ coincides with  the weak quotient topology induced by the quotient map $\pi:(\mathcal{H},w)\rightarrow \hat{\mathcal{H}}$, where $w$   denotes the weak topology on Hilbert space $\mathcal{H}$.
\end{proposition}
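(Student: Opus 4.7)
The plan is to verify the equality by establishing both containments, resting on the observation that each $\rho_{y}$ lifts through $\pi$ to the weakly continuous, $\mathbb{T}$-invariant function $x\mapsto\vert\langle x,y\rangle\vert$ on $\mathcal{H}$. For the direction that $\tau_{w}$ is weaker than the weak quotient topology, I would observe that $\rho_{y}\circ\pi=\vert\langle\cdot,y\rangle\vert$ is weakly continuous as the composition of a weakly continuous linear functional with the continuous modulus. The universal property of the quotient topology then makes every $\rho_{y}$ continuous with respect to the weak quotient topology, and since $\tau_{w}$ is by definition the coarsest topology on $\hat{\mathcal{H}}$ making each $\rho_{y}$ continuous, this direction follows at once.

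The reverse containment is more delicate. Fix $U$ with $\pi^{-1}(U)$ weakly open, pick $\hat{x}_{0}\in U$ with representative $x_{0}$, and choose $y_{1},\dots,y_{n}\in\mathcal{H}$ together with $\varepsilon>0$ so that
\begin{equation*}
W=\{x\in\mathcal{H}:\vert\langle x-x_{0},y_{k}\rangle\vert<\varepsilon,\ 1\le k\le n\}\subseteq\pi^{-1}(U).
\end{equation*}
I would enlarge the control family to $\{y_{k},\,y_{j}+y_{k},\,y_{j}+iy_{k}\}_{j,k}$ so that the polarization identity
\begin{equation*}
\mathrm{Re}\bigl(\langle x, y_{j}\rangle\overline{\langle x, y_{k}\rangle}\bigr)=\tfrac{1}{2}\bigl(\vert\langle x, y_{j}+y_{k}\rangle\vert^{2}-\vert\langle x, y_{j}\rangle\vert^{2}-\vert\langle x, y_{k}\rangle\vert^{2}\bigr)
\end{equation*}
and its imaginary analogue involving $y_{j}+iy_{k}$ recover the products $\langle x, y_{j}\rangle\overline{\langle x, y_{k}\rangle}$ from modulus data alone. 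Intersecting the preimages $\rho_{z}^{-1}(\vert\langle x_{0},z\rangle\vert-\delta,\vert\langle x_{0},z\rangle\vert+\delta)$ over this enlarged family yields a $\tau_{w}$-neighborhood $V_{\delta}$ of $\hat{x}_{0}$ on which the products $\langle x,y_{j}\rangle\overline{\langle x,y_{k}\rangle}$ lie arbitrarily close to $\langle x_{0},y_{j}\rangle\overline{\langle x_{0},y_{k}\rangle}$. Assuming $\langle x_{0},y_{1}\rangle\neq 0$ (after possible relabeling), I would choose $\lambda\in\mathbb{T}$ that aligns the phase of $\lambda\langle x,y_{1}\rangle$ with that of $\langle x_{0},y_{1}\rangle$; the product-closeness then propagates to give $\lambda\langle x,y_{k}\rangle\approx\langle x_{0},y_{k}\rangle$ for all $k$, which forces $\lambda x\in W$ and hence $\hat{x}\in U$. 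The degenerate case $\langle x_{0},y_{k}\rangle=0$ for every $k$ is immediate, since $W$ is then already $\mathbb{T}$-saturated and $\pi(W)$ itself is a $\tau_{w}$-neighborhood of $\hat{x}_{0}$ inside $U$.

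The main obstacle is the quantitative phase-alignment: choosing $\delta$ small enough and uniformly in $\hat{x}\in V_{\delta}$ so that the approximate equalities for the products translate into genuine $\varepsilon$-deviations of the inner products at $y_{1},\dots,y_{n}$. Once the polarization bookkeeping and the degenerate case are in place, the remainder of the argument is mechanical, which is presumably why the authors describe the proof as straightforward and leave it to the reader.
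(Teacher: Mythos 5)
The paper gives no argument for this proposition --- the proof is explicitly ``left to the reader'' --- so there is no authorial proof to compare against; what matters is whether your argument stands on its own, and it does. The easy containment via the universal property of the quotient topology is exactly right. For the reverse containment, polarization is the correct key idea, and the quantitative phase-alignment you flag as the main obstacle closes without difficulty: taking $\lambda=\frac{\langle x_{0},y_{1}\rangle}{\vert\langle x_{0},y_{1}\rangle\vert}\cdot\frac{\vert\langle x,y_{1}\rangle\vert}{\langle x,y_{1}\rangle}$ (legitimate once $\delta<\vert\langle x_{0},y_{1}\rangle\vert$, which forces $\langle x,y_{1}\rangle\neq 0$ on $V_{\delta}$), one has the identity $\lambda\langle x,y_{k}\rangle=\langle x,y_{k}\rangle\overline{\langle x,y_{1}\rangle}\big/\overline{\lambda\langle x,y_{1}\rangle}$, whose numerator is within $O(\delta)$ of $\langle x_{0},y_{k}\rangle\overline{\langle x_{0},y_{1}\rangle}$ by the polarization control and whose denominator is within $\delta$ of $\overline{\langle x_{0},y_{1}\rangle}\neq 0$, uniformly over $\hat{x}\in V_{\delta}$; hence $\lambda\langle x,y_{k}\rangle$ tends to $\langle x_{0},y_{k}\rangle$ uniformly as $\delta\to 0$, so $\lambda x\in W$ and $\hat{x}\in U$. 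Two bookkeeping points deserve to be made explicit in a final write-up: passing from $\bigl\vert\,\vert\langle x,z\rangle\vert-\vert\langle x_{0},z\rangle\vert\,\bigr\vert<\delta$ to closeness of the \emph{squares} (which is what polarization consumes) uses that the reference values $\vert\langle x_{0},z\rangle\vert$ range over a fixed finite set, and indices $k$ with $\langle x_{0},y_{k}\rangle=0$ are already handled by the modulus control alone, so a single nonvanishing coordinate $y_{1}$ suffices to anchor the phase. With those remarks your proof is complete and correct.
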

We  note that the inclusion $\tau_{\Phi}\preceq \tau_{w}$  holds
trivially,  while   Proposition \ref{Haus} implies  that even the
condition $\overline{\text{span}}(\Phi) = \mathcal{H}$ is  not
sufficient to ensure that $\tau_\phi$ and $\tau_w$ are equivalent.
For instance, consider   $\mathcal{H} = \mathbb{C}^2$ and $\Phi =
\{e_1, e_2\}$, the standard orthonormal basis. Then, $\tau_\phi$
is not Haudorff. Specifically, consider $x = (1, 1)$ and $y= (1,
-1)$, we have  $|\langle x, e_j \rangle| = |\langle y, e_j
\rangle|$ for  $j=1,2$, while $\hat{x}\neq \hat{y}$. In short:
$$\tau_{\Phi}\preceq\tau_{w}\preceq  \tau_{\pi}.$$
We  point out that $(\hat{\mathcal{H}}, \tau_{\Phi})$  cannot be compact or discrete. To be more precise,
  the open cover $$ \{\hat{x}: \vert \langle x, \phi_{i}\rangle \vert < k\}_{i\in I ,k\in \mathbb{N}}$$ of $  \hat{\mathcal{H}}$ contains no finite subcover. In addition, $(\hat{\mathcal{H}}, \tau_{\pi})$ (and so $(\hat{\mathcal{H}}, \tau_{\Phi})$) cannot be discrete. Indeed, if $(\hat{\mathcal{H}}, \tau_{\pi})$  is discrete, then $\{\hat{x}\}$ is open in $\tau_{\pi}$, for each $\hat{x}\in \hat{\mathcal{H}}$. So, $$\pi^{-1}\{\hat{x}\}=\{xe^{i\theta}: \quad \theta\in [0,2\pi)\}$$ is open in $\mathcal{H}$. Hence, considering the continuous mapping $$f: \mathcal{H}\times [0,2\pi) \rightarrow \mathcal{H}; \quad (t,\theta)\mapsto e^{i\theta}t$$ follows that $\{x\}\times [0,2\pi)$ is an open subspace in $\mathcal{H}\times [0,2\pi)$. And consequently, $\{x\}$ is open in $\mathcal{H}$, for each $x\in \mathcal{H}$, i.e., $\mathcal{H}$ is discrete that is a contradiction.

In the following example, we observe that  $\tau_{\Phi}$ can be strictly weaker than the metric $d_{\Phi}$ and  the topology $\tau_{w}$.
\begin{example}\label{strict}
Let  $\Phi=\{e_{i}+e_{j}\}_{i<j}$, where $\{e_{i}\}_{i=1}^{\infty}$ is the canonical orthonormal basis of  $l^{2}$. Then $\Phi$ does phase retrieval in $l^{2}$, see \cite{Casazza22}. Now consider the sequence $ \{x_{n}\}_{n=1}^{\infty}=\{ne_{n}\}_{n=1}^{\infty}$. One can see by simply computing  that $\lim_{n}\vert \langle x_{n},\phi_{i}\rangle\vert =0$, for all $i$. However, taking $y=\{1/n\}_{n=1}^{\infty}$ implies that $\vert \langle x_{n},y\rangle\vert =1$, for all $n$. That means $ \{x_{n}\}_{n=1}^{\infty}$ does not converges  in $\tau_{w}$. Furthermore, $\lim_{n}d_{\Phi}(\hat{x}_{n},0)= \lim_{n}\sup_{i<j}\vert \langle  ne_{n},e_{i}+e_{j}\rangle\vert =\infty.$
\end{example}
In the next result, we apply the Urysohn's metrization theorem to show that $\hat{\mathcal{H}}$ along with the initial topology $\tau_{\Phi}$ is metrizable.
\begin{corollary}
Suppose $\Phi$ does phase retrieval in $\mathcal{H}$. Then $(\hat{\mathcal{H}}, \tau_{\Phi})$ is metrizable.
\end{corollary}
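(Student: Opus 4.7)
The plan is to verify the three hypotheses of Urysohn's metrization theorem: the space $(\hat{\mathcal{H}}, \tau_{\Phi})$ must be shown to be Hausdorff, second countable, and regular. Hausdorffness is already settled: since $\Phi$ does phase retrieval, Lemma \ref{Haus} gives exactly this. So I would focus the proof on the remaining two properties.

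For second countability, I would exploit the explicit subbasis $\{\rho_{\phi_i}^{-1}((r_1,r_2)) : i\in I,\; r_1,r_2\in \mathbb{R}^+\}$ for $\tau_{\Phi}$. Because $I$ is a countable index set and $\mathbb{Q}$ is dense in $\mathbb{R}$, restricting $r_1,r_2$ to nonnegative rationals produces a countable subbasis that generates the same topology (any open interval can be written as a countable union of rational open intervals, and $\rho_{\phi_i}^{-1}$ commutes with unions). Taking finite intersections yields a countable basis, which is the desired property.

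For regularity, the cleanest route is through complete regularity. Consider the evaluation map
\[
\iota:\hat{\mathcal{H}}\longrightarrow \mathbb{R}^I,\qquad \iota(\hat{x})=\bigl(\rho_{\phi_i}(\hat{x})\bigr)_{i\in I}.
\]
The phase retrieval assumption makes $\iota$ injective, and by the very definition of the initial topology, $\iota$ is a topological embedding of $(\hat{\mathcal{H}},\tau_{\Phi})$ into the product $\mathbb{R}^I$ with its product topology. Since $\mathbb{R}$ is completely regular, products and subspaces of completely regular spaces are completely regular, so $\tau_{\Phi}$ is completely regular. Combined with the $T_1$ property already observed in the paper after the definition of $\tau_{\Phi}$, this yields $T_{3\frac12}$ and in particular regularity. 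Urysohn's theorem then delivers metrizability.

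I do not expect any serious obstacle here; the only point that deserves care is checking that the restriction to rational endpoints in the subbasis genuinely reproduces $\tau_{\Phi}$, and that the initial-topology characterization of $\iota$ indeed makes it an embedding. An attractive alternative proof, worth mentioning, bypasses Urysohn entirely: since $I$ is countable, $\mathbb{R}^I$ is already metrizable (e.g.\ via $d((a_i),(b_i)) = \sum_i 2^{-i}|a_i-b_i|/(1+|a_i-b_i|)$ after enumerating $I$), so the embedding $\iota$ pulls back a metric to $\hat{\mathcal{H}}$ directly.
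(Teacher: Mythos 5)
Your main argument is essentially the paper's: both verify the hypotheses of Urysohn's metrization theorem, getting Hausdorffness from Lemma \ref{Haus}, second countability from the countable subbasis, and regularity from the fact that $\{\rho_{\phi_i}\}_{i\in I}$ is a separating family inducing $\tau_{\Phi}$ (the paper cites Proposition 2.4.8 of Megginson for this step, whose content is precisely your embedding of $(\hat{\mathcal{H}},\tau_{\Phi})$ into $\mathbb{R}^I$). Your closing remark --- that since $I$ is countable one can simply pull a metric back from $\mathbb{R}^I$ through that embedding and skip Urysohn altogether --- is correct and is a genuine shortcut the paper does not take.
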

\begin{proof}
We note that $(\hat{\mathcal{H}}, \tau_{\Phi})$ is  a second countable space as well as a Hausdorff space by using Proposition \ref{Haus}. On the other hand, since $\Phi$ does phase retrieval,  the collection $\{\rho_{\phi_{j}}\}_{j\in I}$ is a separating  family of functions from $\hat{\mathcal{H}}$ into $\mathbb{R}$, and so $(\hat{\mathcal{H}}, \tau_{\Phi})$ is regular,
by  Proposition 2.4.8 of \cite{book1}.
Hence, the Urysohn's metrization theorem assures that  $(\hat{\mathcal{H}}, \tau_{\Phi})$ is a metrizable topological space.
\end{proof}
\subsection{Finite dimensional case}
 A straightforward argument shows that if $\Phi=\{\phi_{i}\}_{i=1}^{3}$ is a phase retrieval frame in $\mathbb{R}^{2}$, then   $\tau_{\Phi}$ and $\tau_{w}$ are equivalent. More precisely, we may consider without loss of generality $\phi=\{e_{1}, e_{2},  \alpha e_{1}+ \beta e_{2}\}$ for non-zero values $\alpha, \beta$ \cite{Arefi5}. Suppose the sequence $\{\hat{x}_{n}\}_{n=1}^{\infty}=\{[(a_{n}, b_{n})]\}_{n=1}^{\infty}$  converges to  $\hat{x}=[(a,b)]$  in $(\hat{\mathbb{R}^{2}}, \tau_{\Phi})$. Then by using Lemma \ref{converges} we get
\begin{eqnarray*}
  \lim_{n} a_{n}^{2}= a^{2}, \quad \lim_{n} b_{n}^{2}= b^{2}, \quad   \lim_{n} (\alpha a_{n} +\beta b_{n})^{2}=(\alpha a +\beta b)^{2}.
\end{eqnarray*}
The above equations easily follow that $\lim_{n} a_{n}b_{n}= ab$. Therefore, for every $y=(y_{1},y_{2})\in \mathbb{R}^{2}$
$$  \lim_{n} \left(a_{n}y_{1}+b_{n}y_{2}\ \right)^{2}= \left(a y_{1}+b y_{2} \right)^{2}.$$   Thus, $\{\hat{x}_{n} \}_{n=1}^{\infty}$ converges to $\hat{x}$ in $(\hat{\mathbb{R}^{2}},\tau_{w})$,  by Lemma \ref{converges}.
 \begin{example}
Let $\Phi=\{e_{1}, e_{2}, e_{3}, \phi_{4}, \phi_{5}\}$ does  phase retrieval   in     $\mathbb{R}^{3}$, where $\phi_{4}=(t_{1},t_{2},t_{3})$ and $\phi_{5}=(s_{1},s_{2},s_{3})$.
Then $\tau_{\Phi}$ and $\tau_{w}$ are equivalent.
To see this,
suppose that  $\{\hat{x}_{n}\}_{n=1}^{\infty}=\{[(a_{n}, b_{n}, c_{n})]\}_{n=1}^{\infty}$ is a sequence in $\hat{\mathbb{R}^{3}}$. If $\lim_{n}\hat{x}_{n}=\hat{x}$ in $\tau_{\Phi}$ where $\hat{x}=[(a,b,c)]$, then by using Lemma \ref{converges}    we get
\begin{eqnarray}\label{equa}
&& \lim_{n} a_{n}^{2}= a^{2}, \quad \lim_{n} b_{n}^{2}= b^{2},\quad \lim_{n} c_{n}^{2}= c^{2}
\end{eqnarray}
\begin{eqnarray*}
&&  \lim_{n} \left(a_{n}b_{n}t_{1}t_{2}+a_{n}c_{n}t_{1}t_{3}+b_{n}c_{n}t_{2}t_{3}\right)=abt_{1}t_{2}+ac t_{1}t_{3}+bc t_{2}t_{3}\\
&&  \lim_{n} \left(a_{n}b_{n}s_{1}s_{2}+a_{n}c_{n}s_{1}s_{3}+b_{n}c_{n}s_{2}s_{3}\right)=abs_{1}s_{2}+ac s_{1}s_{3}+bc s_{2}s_{3}.
\end{eqnarray*}
This easily follows that
\begin{eqnarray}\label{equa22}
&&  \lim_{n} \left((a_{n}b_{n}-ab)t_{2}s_{2}(t_{1}s_{3}-s_{1}t_{3})+(a_{n}c_{n}-ac)t_{3}s_{3}(t_{1}s_{2}-s_{1}t_{3})\right)=0\\
&&  \lim_{n} \left((a_{n}b_{n}-ab)t_{1}s_{1}(t_{3}s_{2}-s_{3}t_{2})+(b_{n}c_{n}-bc)t_{3}s_{3}(t_{1}s_{2}-s_{1}t_{3})\right)=0.
\end{eqnarray}
Notice that the phase retrieval property of $\Phi$ guarantees that all coefficients in $(4.2)$ and $(4.3)$  are non-zero \cite{Arefi5}. Moreover, it   follows from $(\ref{equa})$ that $\lim_{n} a_{n}^{2}b_{n}^{2}= a^{2}b^{2}$. In particular, every convergent subsequence of $\{a_{n}b_{n}\}_{n=1}^{\infty}$ converges to $ab$  or $-ab$. We claim that
\begin{equation}\label{liminf}
\limsup_{n}a_{n}b_{n}=ab=\liminf_{n}a_{n}b_{n}.
\end{equation}
 Otherwise, $a,b,c$ are non-zero and  there exists a subsequence $\{a_{n_k}b_{n_k}\}_{k=1}^{\infty}$ so that $\lim_{n}a_{n_k}b_{n_k}=-ab$. Applying $(4.2)$ and $(4.3)$ and keeping in mind that their coefficients are non-zero  imply that $$ \lim_{n}a_{n_k}c_{n_k}=-ac, \quad  \lim_{n}b_{n_k}c_{n_k}=-bc.$$ Therefore $ \lim_{n}a^{2}_{n_k}b^{2}_{n_k}c^{2}_{n_k}=-a^{2}b^{2}c^{2}$, which is impossible. Thus, $(4.2)$, $(4.3)$ and $(\ref{liminf})$ show  that $\lim_{n}a_{n}c_{n}=ac$ and $\lim_{n}b_{n}c_{n}=bc$.
Combining these facts and $(\ref{equa})$, for every $y=(y_{1},y_{2},y_{3})$ we deduce that
$$  \lim_{n} \left(a_{n}y_{1}+b_{n}y_{2}+c_{n}y_{3} \right)^{2}= \left(ay_{1}+b y_{2}+c y_{3} \right)^{2}.$$  This means that $\{\hat{x}_{n} \}_{n=1}^{\infty}$ converges to $\hat{x}$ in $(\hat{\mathbb{R}^{3}},\tau_{w})$ by Lemma \ref{converges}, as required.
\end{example}
The above examples provide motivation for  investigating the equivalence of topologies $\tau_{\Phi}$ and $\tau_{w}$. Applying Lemma  \ref{converges}, we show that for a finite dimensional Hilbert space  $\mathcal{H}_{n}$, the topologies $\tau_{\Phi}$ and $\tau_{w}$  are equivalent  on $\hat{\mathcal{H}}_{n}$.
\begin{theorem}\label{mainfinite}
Let $\Phi=\{ \phi_{i}\}_{i\in I_{m}}$ be a frame for   the  finite dimensional Hilbert space $\mathcal{H}_{n}$. Then $\tau_{\Phi}$ and $\tau_{w}$  are equivalent if and only if
$\Phi$ does  phase retrieval   in     $\mathcal{H}_{n}$.
\end{theorem}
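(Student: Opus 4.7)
My plan splits into the two implications, with the nontrivial content in the forward direction. For the implication where $\Phi$ does phase retrieval, the inclusion $\tau_{\Phi}\preceq \tau_{w}$ is already Lemma \ref{converges}(iii), so the entire task is to prove $\tau_{w}\preceq \tau_{\Phi}$. Both topologies are first countable in this setting: $\tau_{\Phi}$ is metrizable by the preceding corollary (it coincides with $\tau_{d_{\Phi}}$ in finite dimensions), and $\tau_{w}$ on $\hat{\mathcal{H}}_{n}$ is metrized by $D$, which follows quickly from the definition of $D$ together with Bolzano--Weierstrass. So it is enough to establish the sequential statement: if $\hat{x}_{n}\to \hat{x}$ in $\tau_{\Phi}$, then $\hat{x}_{n}\to \hat{x}$ in $\tau_{w}$.

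To prove this sequential statement I would proceed in three steps. First, the lower frame bound gives $A\|x_{n}\|^{2}\le \sum_{i=1}^{m}|\langle x_{n},\phi_{i}\rangle|^{2}$, and since the sum is finite, Lemma \ref{converges}(ii) lets me pass to the limit on the right to obtain a uniform bound on $\|x_{n}\|$. Second, given any subsequence of $\{x_{n}\}$, Bolzano--Weierstrass extracts a further subsequence $x_{n_{k}}\to z$ in norm; by continuity of each $\rho_{\phi_{i}}$, the limit satisfies $|\langle z,\phi_{i}\rangle|=|\langle x,\phi_{i}\rangle|$ for every $i$, and the phase retrieval property forces $\hat{z}=\hat{x}$, i.e.\ $z=\lambda x$ for some unimodular $\lambda$. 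Third, for every $y\in\mathcal{H}_{n}$ this gives $|\langle x_{n_{k}},y\rangle|\to |\langle z,y\rangle|=|\langle x,y\rangle|$, hence $\hat{x}_{n_{k}}\to \hat{x}$ in $\tau_{w}$ by Lemma \ref{converges}(i). Since every subsequence has a further subsequence converging to $\hat{x}$ in $\tau_{w}$, the full sequence does as well.

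The converse is essentially free from Lemma \ref{Haus}. If $\Phi$ fails to do phase retrieval, then $(\hat{\mathcal{H}}_{n},\tau_{\Phi})$ is not Hausdorff, while $(\hat{\mathcal{H}}_{n},\tau_{w})$ is always Hausdorff (as noted after the definition of $\tau_{w}$); a Hausdorff and a non-Hausdorff topology cannot agree, so $\tau_{\Phi}\neq \tau_{w}$.

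The main obstacle is the middle step in the forward direction, where boundedness of $\{x_{n}\}$, compactness of the unit ball, and the phase retrieval property must be combined to pin down the limit of the extracted subsequence as the representative $x$ modulo phase. This is exactly where finite-dimensionality enters in an essential way: Example \ref{strict} shows that without compactness (the sequence $x_{n}=ne_{n}$ has no bounded subsequence) the argument collapses, which is what prevents the theorem from extending to the infinite-dimensional case.
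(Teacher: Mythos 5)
Your proof is correct, but it takes a genuinely different route from the paper's. The paper first reduces to a frame of the form $\Psi=\{e_{1},\dots,e_{n},\psi_{n+1},\dots,\psi_{m}\}$ containing the standard orthonormal basis (using that every finite phase retrieval frame contains a Riesz basis and that phase retrieval is preserved under the invertible operator $U$ with $\Phi=U\Psi$), reads off norm-boundedness of a $\tau_{\Psi}$-convergent sequence from the coordinate functionals, and then gets $\tau_{w}\preceq\tau_{\Psi}$ on each ball $\hat{B}_{N}$ from the fact that the identity is a continuous bijection from the $\tau_{w}$-compact set $\hat{B}_{N}$ onto the Hausdorff space $(\hat{B}_{N},\tau_{\Psi})$, hence a homeomorphism; finally it transfers back through $U$. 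You instead work with an arbitrary frame directly: the lower frame bound replaces the coordinate argument for boundedness (and makes the whole $U$-reduction unnecessary), and in place of the compact-to-Hausdorff homeomorphism you run a Bolzano--Weierstrass subsubsequence argument in which the phase retrieval property is what pins the norm-limit $z$ down to $\hat{z}=\hat{x}$. Both arguments ultimately rest on compactness of bounded sets in $\mathcal{H}_{n}$ and on passing from sequential statements to topology comparison, a step that needs first countability of $\tau_{\Phi}$ and which you, unlike the paper, make explicit. What the paper's route buys is the reusable byproduct that $\tau_{w}$ and $\tau_{\Phi}$ agree on every ball $\hat{B}_{m}$ even in infinite dimensions (Corollary \ref{compact}), which feeds into the paracompactness theorem; what your route buys is a shorter, more self-contained proof of the theorem itself. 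The backward implication (Hausdorffness of $\tau_{w}$ versus Lemma \ref{Haus}) is identical in both.
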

\begin{proof}
We note that $\tau_{w}$ is Hausdorff, so
in case $\tau_{\Phi}$ and $\tau_{w}$  are equivalent   in     $\mathcal{H}_{n}$, applying Proposition  \ref{Haus} one easily deduces that $\Phi=\{ \phi_{i}\}_{i\in I_{m}}$ does  phase retrieval   in     $\mathcal{H}_{n}$.

For the converse, first assume that $\Psi=\{e_{1}, ... , e_{n}, \psi_{n+1},...,  \psi_{m}\}$ does phase retrieval,  where $\{e_{i}\}_{i\in I_{n}}$  is the canonical orthonormal basis of  $\mathcal{H}_{n}$ and claim that  $\tau_{w}$ and $ \tau_{\Psi}$  coincide. To this end, it is sufficient to show that $\tau_{w}\preceq \tau_{\Psi}$, so  let   $\{\hat{x}_{k}\}_{k=1}^{\infty}$
  converges to $\hat{y}$  in $\tau_{\Psi}$, where $x_{k}=(x^{k}_{1}, ..., x^{k}_{n})$ and $y=(y_{1},...y_{n})$. Due to Lemma \ref{converges} we obtain $$\lim_{k}\vert x^{k}_{i}\vert=\vert y_{i}\vert , \quad (i\in I_{n}).$$ This easily follows that $\{ x_{k}\}_{k=1}^{\infty}$ is norm bounded. Put
\begin{equation}\label{Bnhat}
\hat{B}_{m}=\{\hat{x}\in \hat{\mathcal{H}}_{n}: \Vert x\Vert \leq m\}, \quad (m\in \mathbb{N}).
\end{equation}
Then, there exists $N\in \mathbb{N}$ such that $\hat{x}_{k}, \hat{x}\in \hat{B}_{N}$, for all $k\in \mathbb{N}$. We show that $\tau_{\Psi}$ and $\tau_{w}$ coincide on $\hat{B}_{m}$, for each $m$.
We first need to prove that via the relative topology $\hat{B}_{m}$  is compact, for each $m$.
To this end, we note that   the quotient  mapping $$\Lambda: (\mathcal{H}_{n}, \Vert .\Vert ) \rightarrow (\hat{\mathcal{H}}_{n}, \tau_{w}); \quad x \mapsto \hat{x}$$ is continuous, in which we consider the usual norm  topology on $\mathcal{H}_{n}$. More precisely, $\mathcal{H}_{n}$ is separable and  $\Lambda$ is sequentially continuous by Lemma \ref{converges}.
 Furthermore, for each $m$, $B_{m}=\{x: \Vert x\Vert \leq m\}$ is a compact subset of $(\mathcal{H}_{n},w)$. Hence, $$\hat{B}_{m}:=\Lambda(B_{m})=\{\hat{x}\in \hat{\mathcal{H}}_{n}: \Vert x\Vert \leq m\}$$ is compact in $(\hat{\mathcal{H}}_{n}, \tau_{w})$.
What is more,
the identity mapping $$I:(\hat{B}_{m} ,\tau_{w})\rightarrow (\hat{B}_{m} , \tau_{\Psi})$$ is continuous, due to $\tau_{\Psi}$ is weaker than $\tau_{w}$, by Lemma \ref{converges} (iii).
Thus, the set  $\hat{B}_{m}$  in $\tau_{w}$ is compact, for any $n\in \mathbb{N}$, moreover   $(\hat{\mathcal{H}}_{n}, \tau_{\Psi})$ is Hausdorff by Proposition \ref{Haus}.  Hence, $I$  has a continuous inverse. This  immediately follows that  $\tau_{w}$ and $\tau_{\Psi}$ coincide on $\hat{B}_{m}$. Therefore, the sequence  $\{\hat{x}_{k}\}_{k=1}^{\infty}$
  converges to $\hat{y}$  in $\tau_{w}$, and so $\tau_{w}\preceq \tau_{\Psi}$.

 Now let $\Phi=\{ \phi_{i}\}_{i\in I_{m}}$ be a phase retrieval frame in $\mathcal{H}_{n}$, then $\Phi$ contains a Riesz basis and therefore
 there exists a bounded invertible operator $U\in B(\mathcal{H}_{n})$ and the vectors $\{\psi_{i}\}_{i=n+1}^{m}$ such that we can write;
$$\Phi=\{Ue_{1}, ... ,U e_{n}, U\psi_{n+1},...,  U\psi_{m}\}.$$ To complete  the proof, we will show that $\tau_{\Phi}$ and $\tau_{w}$ coincide.
Since the phase retrieval property is preserved under invertible operators it follows that $\Psi=\{e_{i}\}_{i\in I_{n}}\cup \{\psi_{i}\}_{i=n+1}^{m}$ is a phase retrieval frame  and hence $\tau_{\Psi}$ and $\tau_{w}$ coincide by the above argument. Now if
 $\{\hat{x}_{n}\}_{n=1}^{\infty}\subseteq \hat{\mathcal{H}}_{n}$
  converges to $\hat{x}$  in $\tau_{\Phi}$, then by Lemma \ref{converges} we get
$$\lim _{n}\vert \langle  U^{*}x_{n} , \psi_{i} \rangle\vert =\lim _{n}\vert \langle x_{n} , \phi_{i}\rangle\vert=\vert \langle x , \phi_{i}\rangle\vert=\vert \langle U^{*}x , \psi_{i}\rangle\vert, \quad i\in I_{m}.$$ This assures that $\{\widehat{U^{*}x}_{n}\}_{n=1}^{\infty}$  in $\tau_{\Psi}$ and so in $\tau_{w}$ converges to $\widehat{U^{*}x}$. Thus, $$\lim _{n}\vert \langle x_{n} , z\rangle\vert=
\lim_{n} \vert \langle  U^{*}x_{n} , U^{-1}z\rangle\vert =\vert \langle U^{*}x , U^{-1}z\rangle\vert=\vert \langle x , z\rangle\vert,$$ for every $z\in \mathcal{H}_{n}$.
\end{proof}

\begin{remark}
\begin{itemize}
\item[(i)]
Suppose $\Phi$ is a phase retrieval frame  in a finite dimensional Hilbert space, then  by applying the previous results we obtain  $$\tau_{d_{\Phi}} \cong \tau_{D}\cong \tau_{\mathfrak{D}}\cong \tau_{\Phi}\cong \tau_{w}.$$   However, in the  infinite dimensional case the topology $\tau_{\Phi}$ and the topology induced by the metric $d_{\Phi}$ do not coincide, in general. Again, consider the phase retrieval frame $\phi$ as in
Example \ref{strict}. Then $$d_{\Phi}(\hat{e}_{n},0)= \sup_{i<j}\vert \langle  e_{n},e_{i}+e_{j}\rangle\vert =1.$$
A simple observation shows that $\lim_{n}\vert \langle  e_{n},e_{i}+e_{j}\rangle\vert =0$. Thus $d_{\Phi}$ and   $\tau_{\Phi}$ do not  coincide even  on $\hat{B}_{m}$, $m\in \mathbb{N}$.
\item[(ii)]Notice that the condition that $\Phi$ does phase retrieval in Theorem \ref{mainfinite} is necessary. For example consider the orthonormal basis $\Phi=\{(1,0), (0,1)\}$ in $\mathbb{R}^{2}$. Take the sequence $x_{k}=((-1)^{k},(-1)^{k+1})$. Then  $\{\hat{x}_{n}\}_{n=1}^{\infty}$
  converges to $\widehat{(1,1)}$  in $\tau_{\Phi}$, however does not converge in $\tau_{w}$. So, in this case $\tau_{\Phi}\prec\tau_{w}$.
\end{itemize}
\end{remark}
\subsection{Direct limit topology}

In a topological space $X$, a refinement of a cover $V$ of $X$ is a new cover $W$ of $X$ such that every set in $W$ is contained in some set in $V$. A paracompact space is a topological space  $X$ in which every open cover $V$  has an open refinement $W$ that is locally finite, i.e., each point in the space  has a neighborhood that intersects only finitely many of the sets in the collection $W$. For instance, every compact space is paracompact \cite{Munkres}.  In our setting, it is important to note that the quotient norm topology, which is equivalent to the topology $\tau_D$ on $\hat{\mathcal{H}}$, is metrizable and therefore naturally paracompact by Dieudonné’s Theorem \cite{dieudonne1944}.  However, as pointed out by Woronowicz \cite{woronowicz1970} and Komatsu \cite{Komatsu}, such a topology might not fully reflect the internal structure of the space.

In what follows, we consider the direct limit topology on $\hat{\mathcal{H}}$ and demonstrate that this topology not only preserves paracompactness but also ensures that $\hat{\mathcal{H}}$ is $\sigma$-compact. We also make a comparison between the topology $\tau_{w}$, the  initial topology $\tau_{\Phi}$  induced by a phase retrieval frame $\Phi$, and the  direct limit topology.
First, we recall that for a tower of topological spaces $X_{1}\subseteq X_{2}\subseteq \dots$, the direct limit topological space $\underrightarrow{\lim}X_{n}$ is the space $\cup_{n\in \mathbb{N}} X_{n}$ endowed with the topology $\tau_{\infty}$:
$$U \in \tau_{\infty} \iff U\cap X_{n} \text{ is open in } X_{n}, \forall n\in \mathbb{N}.$$
In the other words, the direct limit topology is the finest topology so that the inclusion mappings $X_{n}\rightarrow \underrightarrow{lim}X_{n}$, $n\in \mathbb{N}$, are continuous.
We also recall that, a tower  $X_{1}\subseteq X_{2}\subseteq ...$   of spaces is said to be closed if each $X_{n}$ is closed in $X_{n+1}$.
We refer the reader to \cite{book} for more details.

 In the following theorem, we formally prove that $\hat{\mathcal{H}}$ is a paracompact space, a crucial property that ensures the existence of partitions of unity within this inductive framework.
\begin{theorem} \label{paracompact}
The space   $\hat{\mathcal{H}}$ endowed by the  direct limit topology    is
 paracompact.
\end{theorem}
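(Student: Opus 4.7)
The plan is to realize $(\hat{\mathcal{H}}, \tau_{\infty})$ as a $k_{\omega}$-space built from the natural tower $\hat{B}_{1}\subseteq \hat{B}_{2}\subseteq \cdots$ of bounded equivalence classes, where $\hat{B}_{n}$ is as in (\ref{Bnhat}) and is endowed with the subspace topology inherited from $\tau_{w}$. First I would verify that each $\hat{B}_{n}$ is compact Hausdorff: by Proposition \ref{Haus10}, $\hat{B}_{n}$ equals the image under $\pi:(\mathcal{H},w)\to (\hat{\mathcal{H}},\tau_{w})$ of the closed norm ball $B_{n}=\{x\in\mathcal{H}:\|x\|\leq n\}$, and $B_{n}$ is weakly compact in the separable Hilbert space $\mathcal{H}$ by Banach--Alaoglu; continuity of $\pi$ then yields compactness of $\hat{B}_{n}$, while Hausdorffness of $\tau_{w}$ (observed in the introduction) descends to $\hat{B}_{n}$. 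Moreover, the tower is closed: since $\|\cdot\|$ is weakly lower semicontinuous and constant on equivalence classes, each $\hat{B}_{n}$ is $\tau_{w}$-closed and hence closed in $\hat{B}_{n+1}$.

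Next I would transfer these properties to the direct limit topology. Because $\tau_{\infty}$ is by construction finer than $\tau_{w}$, the space $(\hat{\mathcal{H}},\tau_{\infty})$ is Hausdorff. By the defining property of $\tau_{\infty}$, each inclusion $(\hat{B}_{n},\tau_{w}|_{\hat{B}_{n}})\hookrightarrow(\hat{\mathcal{H}},\tau_{\infty})$ is a topological embedding onto a compact subspace; since $\hat{\mathcal{H}}=\bigcup_{n}\hat{B}_{n}$, the space $(\hat{\mathcal{H}},\tau_{\infty})$ is $\sigma$-compact, and in particular Lindel\"of. Hence, to obtain paracompactness it suffices to establish regularity, and indeed the standard proof produces a locally finite refinement directly.

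The decisive step is the classical $k_{\omega}$ argument: given any open cover $\mathcal{U}$ of $(\hat{\mathcal{H}},\tau_{\infty})$, I would inductively construct finite families $\mathcal{V}_{n}$ of open subsets of $\hat{\mathcal{H}}$ refining $\mathcal{U}$ and covering $\hat{B}_{n}$, using compactness of $\hat{B}_{n}$ at every stage. At step $n+1$ the new elements of $\mathcal{V}_{n+1}$ are chosen, via normality of the compact Hausdorff space $\hat{B}_{n+1}$, so that their closures in $\hat{B}_{n+1}$ are disjoint from $\hat{B}_{n-1}$; this separation extends to $\tau_{\infty}$-open sets in $\hat{\mathcal{H}}$ by the closed tower property (a subset is $\tau_{\infty}$-closed iff its intersection with each $\hat{B}_{k}$ is closed). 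The union $\bigcup_{n}\mathcal{V}_{n}$ is then a locally finite open refinement of $\mathcal{U}$: any point of $\hat{\mathcal{H}}$ lies in some $\hat{B}_{n}$, around which one can exhibit a $\tau_{\infty}$-open neighborhood intersecting only the finitely many members of $\mathcal{V}_{1}\cup\cdots\cup\mathcal{V}_{n+1}$. The main obstacle will be precisely this inductive separation and the verification that local finiteness is witnessed in $\hat{\mathcal{H}}$, not merely in the individual $\hat{B}_{n}$; everything else reduces to assembling the pieces proved above.
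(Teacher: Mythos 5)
Your proposal is correct and follows essentially the same route as the paper: the same exhaustion of $\hat{\mathcal{H}}$ by the sets $\hat{B}_{n}$, the same Banach--Alaoglu/quotient-continuity argument showing each $\hat{B}_{n}$ is compact Hausdorff (and, implicitly, that $\tau_{w}$ and $\tau_{\Phi}$ agree there, so the two choices of subspace topology give the same direct limit), and the same passage to the direct limit of a closed tower of compacta. The only difference is that the paper concludes by citing Lemma 3 of \cite{pentsak} for the statement that such a direct limit is paracompact, whereas you supply the standard $k_{\omega}$/Lindel\"of argument for that final step yourself.
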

\begin{proof}
The main idea of the proof is to  show that  $\hat{\mathcal{H}}$ is  the direct limit of  a
 tower of
 compact spaces.
To this end, we take  $ \hat{B}_{n}=\{\hat{x}: \Vert x\Vert \leq n\}$, for $n\in \mathbb{N}.$ Then obviously
 $\hat{B}_{n}\subseteq \hat{B}_{n+1}$, for all $n\in \mathbb{N}$ and $\hat{\mathcal{H}}=\cup_{n\in \mathbb{N}}\hat{B}_{n}$.
Suppose that $\Phi$ is  a phase retrieval family of vectors in $\mathcal{H}$, then   we endow each $\hat{B}_{n}$ with the relative topology induced by $( \hat{\mathcal{H}}, \tau_{\Phi})$, denoted by $\tau_{\hat{B}_{n}}$ and
take  the direct limit topology $\tau_{\infty}$ on $\hat{\mathcal{H}}=\cup_{n\in \mathbb{N}} \hat{B}_{n}$. That means
$$ U\subset \hat{\mathcal{H}}  \textit { is open in  } \tau_{\infty}\textrm{  if and only if }  U\cap \hat{B}_{n} \textrm{  is open   in   } \tau_{\hat{B}_{n}} , \textrm {  for all  }  n\in \mathbb{N}. $$

An analogous approach to the proof of Theorem \ref{mainfinite} confirms that
    $(\hat{B}_{n}, \tau_{\hat{B}_{n}})$ is a compact  space, for all $n\in \mathbb{N}$.
In fact, assume that $w$ denotes the weak topology on Hilbert space $\mathcal{H}$. Then    the quotient  mapping $$\Lambda:(\mathcal{H},w)\rightarrow (\hat{\mathcal{H}}, \tau_{w}); \quad x \mapsto \hat{x}$$ is continuous.
Moreover, for each $n$, $B_{n}=\{x: \Vert x\Vert \leq n\}$ is a compact subset of $(\mathcal{H},w)$, by the Banach-Alaoglu theorem. And so, $\hat{B}_{n}:=\Lambda(B_{n})$ is compact in $(\hat{\mathcal{H}}, \tau_{w})$.


Again by using ,the continuity of
  the identity mapping $I:(\hat{B}_{n} ,\tau_{w})\rightarrow (\hat{B}_{n} , \tau_{\Phi})$,
the compactness of   $\hat{B}_{n}$  in $\tau_{w}$, for all $n\in \mathbb{N}$, and the fact that   $(\hat{\mathcal{H}}, \tau_{\Phi})$ is Hausdorff, we imply that  $\tau_{w}$ and $\tau_{\Phi}$ coincide on $\hat{B}_{n}$. Hence, $(\hat{B}_{n}, \tau_{\hat{B}_{n}})$  is   compact, for all $n\in \mathbb{N}$. Thus, we get a tower of compact spaces $\hat{B}_{1}\subseteq \hat{B}_{2}\subseteq ...\subseteq \hat{B}_{n}\subseteq...$. It follows from Lemma 3 of \cite{pentsak} that  $\hat{\mathcal{H}}$ is a paracompact space.
\end{proof}

The proof of Theorem \ref{paracompact}    leads to the following result and specially confirms that  the both topologies $\tau_{\Phi}$ and $\tau_{w}$  are equivalent  on the set $\hat{B}_{n}$, for any $n\in \mathbb{N}$.
\begin{corollary} \label{compact}
The following hold;
\begin{itemize}
\item[(i)] $(\hat{\mathcal{H}}, \tau_{\Phi})$ and $(\hat{\mathcal{H}}, \tau_{w})$  are  $\sigma$-compact.
\item[(ii)] Let $\Phi=\{\phi_{i}\}_{i\in I}$ be  a     bounded sequence of vectors   in     $\mathcal{H}$  which  does  phase retrieval.
Then the   topologies $\tau_{w}$ and $\tau_{\Phi}$ coincide on $\hat{B}_{n}$, for any $n\in \mathbb{N}$.
\end{itemize}
\end{corollary}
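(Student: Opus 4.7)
The plan is to harvest both statements directly from the construction in the proof of Theorem \ref{paracompact}, so the work reduces to naming the right ingredients and checking that the hypotheses are in place.

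For part (i), I would use the countable exhaustion $\hat{\mathcal{H}}=\bigcup_{n\in\mathbb{N}}\hat{B}_{n}$, where $\hat{B}_{n}=\{\hat{x}:\|x\|\leq n\}$ as defined in \eqref{Bnhat}. Two compactness assertions are needed. For $\tau_{w}$, I note that the closed ball $B_{n}\subset\mathcal{H}$ is weakly compact by the Banach--Alaoglu theorem, and that the quotient map $\Lambda:(\mathcal{H},w)\to(\hat{\mathcal{H}},\tau_{w})$ is continuous (by Proposition \ref{Haus10}, or directly by Lemma \ref{converges}(i)); hence $\hat{B}_{n}=\Lambda(B_{n})$ is $\tau_{w}$-compact. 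For $\tau_{\Phi}$, compactness then follows because the identity map $(\hat{B}_{n},\tau_{w})\to(\hat{B}_{n},\tau_{\Phi})$ is continuous by Lemma \ref{converges}(iii). Both spaces are therefore countable unions of compact sets, i.e.\ $\sigma$-compact.

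For part (ii), I would invoke the classical fact that a continuous bijection from a compact space onto a Hausdorff space is a homeomorphism. Concretely, under the hypothesis that $\Phi$ does phase retrieval, Lemma \ref{Haus} guarantees that $(\hat{\mathcal{H}},\tau_{\Phi})$ is Hausdorff, and hence so is the subspace $(\hat{B}_{n},\tau_{\Phi})$. The identity map $I:(\hat{B}_{n},\tau_{w})\to(\hat{B}_{n},\tau_{\Phi})$ is a continuous bijection by Lemma \ref{converges}(iii), its domain is compact by part (i), and its target is Hausdorff; therefore $I$ is a homeomorphism, and $\tau_{w}$ and $\tau_{\Phi}$ coincide on $\hat{B}_{n}$.

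There is no substantive obstacle here, since every ingredient has already been set up: Banach--Alaoglu, continuity of $\Lambda$, the inequality $\tau_{\Phi}\preceq\tau_{w}$, and the Hausdorff property under phase retrieval. The only point of mild care is to present (i) before (ii), so that the compactness of $\hat{B}_{n}$ in $\tau_{w}$ is available when invoking the continuous-bijection-to-Hausdorff argument used for (ii).
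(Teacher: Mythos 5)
Your proposal is correct and follows essentially the same route as the paper: the paper derives both statements from the construction in the proof of Theorem \ref{paracompact}, using Banach--Alaoglu for the weak compactness of $B_{n}$, continuity of $\Lambda$ to get $\tau_{w}$-compactness of $\hat{B}_{n}$, and the continuous-bijection-onto-Hausdorff argument (via Lemma \ref{Haus} and Lemma \ref{converges}(iii)) to conclude that $\tau_{w}$ and $\tau_{\Phi}$ agree on each $\hat{B}_{n}$.
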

It is worth of note that,
$\hat{B}:=\{\hat{x}: \Vert x\Vert =1\}$ is not  compact in $(\hat{\mathcal{H}}, \tau_{\Phi})$ in general. In fact, the sequence $\{\hat{e}_{n}\}_{n=1}^{\infty}$ belongs to $\hat{B}$, while by taking $\Phi=\{e_{i}+e_{j}\}_{i<j}$ as in
    Example \ref{strict}, we get $\lim_{n}\vert \langle e_{n}, e_{i}+e_{j} \rangle \vert=0$, for every $i<j$, i.e., $\lim_{n}\hat{e}_{n}=0$    in $\tau_{\Phi}$.

\begin{remark}
Suppose $\Phi$ does phase retrieval in $\mathcal{H}$. Then,
as a comparison we get
\begin{eqnarray*}
\tau_{\Phi} \preceq  \tau_{w}  \preceq  \tau_{\infty}.
\end{eqnarray*}
Indeed, assuming $U\in \tau_{w}$ and applying Corollary \ref{compact} (ii), deduces that $U\cap \hat{B}_{n}\in \tau_{\hat{B}_{n}}$,  for all $n\in \mathbb{N}$. Hence, $U\in \tau_{\infty}$. This fact along with Lemma \ref{converges} (iii) complete the argument.
\end{remark}
 Moreover, take a collection of metric spaces $\{(X_{i},d_{i})\}_{i\in I}$, where $I$ is a totally ordered set, together with a collection of non-expansive maps $ f_{i,j}:X_{i}\rightarrow X_{j}$, for $ i \leq j$
 called the connecting maps
 with the property
$f_{i,k}=f_{i,j}\circ f_{j,k}$
 for all
$i\leq j\leq k$.
 Then,  considering the pairwise disjoint union of $X_{i}$ as in $X=\cup_{i\in I}X_{i}$, a pseudometric on  $X$ is defined by
$$d_{\infty} (x,y)=\inf\{d_{k}(f_{i,k}(x),f_{j,k}(y)); \quad i,j \leq k\}$$ for $x\in X_{i}$ and $y\in X_{j}$. It is said that $x,y\in X$ are equivalent, denoted $x\sim y$, if $d_{\infty} (x,y)=0$. The direct limit of the system of metric spaces $\{X_{i},f_{i,j}\}$  is the set $X/\sim$ together with the metric $d_{\infty}$  and is denoted by $\underrightarrow{lim}X_{i}$, \cite{Lund}.
Now, suppose $\Phi$ is a phase retrieval frame of $\mathcal{H}$. We observed that each  $(\hat{B}_{n}, \tau_{\hat{B}_{n}})$ is metrizable. Denote the metric associated with $ \tau_{\hat{B}_{n}}$ on $\hat{B}_{n}$ by $d_{n}$. Then
the direct limit metric on $\underrightarrow{lim}\hat{B}_{n}$ is defined by
\begin{eqnarray*}d_{\infty}(\hat{x},\hat{y})&=&\inf\{d_{k}(x,y): \quad \hat{x}\in \hat{B}_{i}, \hat{y}\in \hat{B}_{j}; i,j\leq k\}\\
&=& \inf\{d_{k}(x,y):  \hat{x}, \hat{y}\in \hat{B}_{k},\textrm{ for  } k\in \mathbb{N}  \}. \end{eqnarray*}
Note that in this case $\hat{\mathcal{H}}/\sim =\hat{\mathcal{H}}$ and thus $(\hat{\mathcal{H}},d_{\infty})$ is a metric space.
Furthermore, $$  \tau_{d_{\infty}} \preceq  \tau_{w} \preceq  \tau_{\infty}. $$
Indeed, assume that $\{\hat{x}_{k}\}_{k}$ converges to $\hat{x}$ in $\tau_{w}$, this implies that there exists $N$ such that  $\hat{x}_{k},\hat{x}\in \hat{B}_{N}$, for every $k$. Thus,
 $\lim_{k}d_{N}(\hat{x}_{k},\hat{x})=0$ and so
$$\lim_{k}d_{\infty}(\hat{x}_{k},\hat{x})=\lim_{k}\inf\{d_{n}(\hat{x}_{k},\hat{x}): n\in \mathbb{N}, \hat{x}_{k},\hat{x}\in \hat{B}_{n}\}\leq \lim_{k}d_{N}(\hat{x}_{k},\hat{x})=0.$$

\subsection{Topometric spaces}
In the sequel, we present a brief discussion  on topometric spaces, namely spaces equipped both with a topology and with a metric, and show that the triple $(\hat{\mathcal{H}}, \tau_{\Phi},d_{\Phi})$ constitutes a topometric space. This    new perspective provides a fresh outlook on the properties of   $\hat{\mathcal{H}}$.
\begin{definition}\label{20}\cite{yaacov}
A (Hausdorff) topometric space is a triple $(X, \tau ,d)$ where $X$ is a set of points, $\tau$ is a topology on $X$ and $d$ is a metric on $X$:
\begin{itemize}
\item[(i)] The metric refines the topology. In the other words, for every open set $U\subseteq X$ and every $x\in U$ there exists $r>0$ such that $N_{r}(x)\subseteq U$.
\item[(ii)] The metric function $d:X^{2} \rightarrow [0,\infty]$ is lower semi-continuous, i.e., for all $r>0$ the set $\{(x,y)\in X^{2} : d(x,y)\leq r\}$ is closed in $X^{2}$.
\end{itemize}
\end{definition}
 Also, a family of functions $\mathcal{F}\subseteq \mathbb{C}^{X}$ (where $\mathbb{C}^{X}$ denotes the set of all functions from $X$ into $\mathbb{C}$) is said to separate the points from  closed sets if for every closed set $\mathcal{F}\subseteq X$and $x\in   \mathcal{F}^{c}$ there exists a function $f\in \mathcal{F}$ which is constant on $\mathcal{F}$ and takes some different value at $x$.
\begin{definition}\label{211}
Let    $(X, \tau ,d)$  be a topometric space. Say that a family of functions $\mathcal{F}\subseteq C_{L(l)}(X)$ ($1$-Lipschitz functions from $X$ to $\mathbb{C}$) is sufficient if
\begin{itemize}
\item[(i)] It separates points and closed sets.
\item[(ii)] For $x,y\in X$ we have $d(x,y)=\sup \{ \left\vert f(x)-f(y)\right\vert : f\in \mathcal{F}\}$.
\end{itemize}
\end{definition}
A topometric space $X$ is completely regular if $C_{L(l)}(X)$  is sufficient.
Now, assume that $\Phi$ is a phase retrieval sequence in $\mathcal{H}$. We consider the  triple $(\hat{\mathcal{H}}, \tau_{\Phi},d_{\Phi})$, and we note that by applying Lemma  \ref{finer} the metric $d_{\Phi}$ refines the topology $\tau_{\Phi}$.  The aim is  to establish that $(\hat{\mathcal{H}}, \tau_{\Phi}, d_{\Phi})$ forms a topometric space, thereby implying its complete regularity.

\begin{proposition}
 The  triple $(\hat{\mathcal{H}}, \tau_{\Phi},d_{\Phi})$ constitutes  a topometric space.
\end{proposition}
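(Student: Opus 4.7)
The plan is to verify the two axioms of Definition~\ref{20} directly, leveraging the formula for $d_{\Phi}$ obtained in Proposition~\ref{flem} and the refinement result in Lemma~\ref{finer}. Both conditions are rather immediate once we unpack what has already been established; the only mild subtlety lies in getting the semi-continuity direction right.

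For axiom~(i), I would simply invoke Lemma~\ref{finer}: since $\tau_{d_{\Phi}}$ is finer than $\tau_{\Phi}$, every $\tau_{\Phi}$-open set $U$ is in particular $d_{\Phi}$-open, so for each $\hat{x}\in U$ there is $r>0$ with the $d_{\Phi}$-ball $N_{r}(\hat{x})\subseteq U$. This is precisely the assertion that the metric refines the topology.

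For axiom~(ii), the strategy is to exhibit $d_{\Phi}$ as a supremum of $(\tau_{\Phi}\times\tau_{\Phi})$-continuous functions on $\hat{\mathcal{H}}\times\hat{\mathcal{H}}$. By Proposition~\ref{flem},
\[
d_{\Phi}(\hat{x},\hat{y}) \;=\; \sup_{j\in I}\bigl|\rho_{\phi_{j}}(\hat{x})-\rho_{\phi_{j}}(\hat{y})\bigr|.
\]
For each fixed $j\in I$, the function $\rho_{\phi_{j}}:\hat{\mathcal{H}}\to\mathbb{R}$ is continuous by definition of $\tau_{\Phi}$, hence the map $(\hat{x},\hat{y})\mapsto |\rho_{\phi_{j}}(\hat{x})-\rho_{\phi_{j}}(\hat{y})|$ is continuous on $\hat{\mathcal{H}}\times\hat{\mathcal{H}}$ in the product topology. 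The pointwise supremum of a family of continuous real-valued functions is lower semi-continuous: indeed, for any $a\in\mathbb{R}$,
\[
\{(\hat{x},\hat{y}) : d_{\Phi}(\hat{x},\hat{y})>a\} \;=\; \bigcup_{j\in I}\bigl\{(\hat{x},\hat{y}) : |\rho_{\phi_{j}}(\hat{x})-\rho_{\phi_{j}}(\hat{y})|>a\bigr\}
\]
is a union of open sets, so its complement $\{d_{\Phi}\le a\}$ is closed. Applying this with $a=r$ for each $r>0$ yields condition~(ii).

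Putting the two parts together delivers the topometric structure on $(\hat{\mathcal{H}},\tau_{\Phi},d_{\Phi})$. The main (mild) obstacle is just the bookkeeping around lower versus upper semi-continuity: one must remember that Definition~\ref{20}(ii) defines lower semi-continuity via closedness of sublevel sets $\{d\le r\}$, which is exactly the inequality direction produced by a supremum of continuous functions. No further input from the phase retrieval hypothesis is needed beyond what was already used to define $d_{\Phi}$ and to derive the formula in Proposition~\ref{flem}.
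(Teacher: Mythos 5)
Your proof is correct, and part (ii) takes a genuinely different route from the paper. For axiom (i) both arguments are identical: invoke Lemma \ref{finer}. For axiom (ii) the paper argues sequentially: it takes $(\hat{a},\hat{b})$ in the closure of $O=\{d_{\Phi}\le r\}$, picks a sequence $(\hat{x}_n,\hat{y}_n)\in O$ with $d_{\Phi}(\hat{x}_n,\hat{a})\to 0$ and $d_{\Phi}(\hat{y}_n,\hat{b})\to 0$, and concludes $d_{\Phi}(\hat{a},\hat{b})\le r+2\epsilon$ by the triangle inequality. You instead use Proposition \ref{flem} to write $d_{\Phi}$ as a pointwise supremum of the functions $(\hat{x},\hat{y})\mapsto |\rho_{\phi_j}(\hat{x})-\rho_{\phi_j}(\hat{y})|$, each continuous for $\tau_{\Phi}\times\tau_{\Phi}$ since $\tau_{\Phi}$ is by construction the initial topology of the $\rho_{\phi_j}$, and then use that a supremum of continuous functions is lower semi-continuous. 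Your version is arguably the more faithful one: Definition \ref{20}(ii) asks for $\{d\le r\}$ to be closed in $X^2$ with the \emph{topology} $\tau_{\Phi}$, and your argument delivers exactly that, whereas the paper's sequential argument approximates $(\hat{a},\hat{b})$ in the metric $d_{\Phi}$, which a priori only yields closedness for the finer topology $\tau_{d_{\Phi}}$ (recall $\tau_{\Phi}\preceq\tau_{d_{\Phi}}$, and by Example \ref{strict} the inclusion can be strict). The only bookkeeping worth noting is that your appeal to Proposition \ref{flem} requires $\Phi$ to be a bounded phase retrieval sequence, which is the standing hypothesis in this subsection, so nothing is missing.
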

\begin{proof}
Using  Lemma \ref{finer} the metric $d_{\Phi}$ is finer than  $\tau_{\Phi}$ and so the metric refines the topology.
 It is enough to show that
$$ O=\{(\hat{x},\hat{y})\in \hat{\mathcal{H}}\times \hat{\mathcal{H}} : d_{\Phi}(\hat{x},\hat{y})\leq  r\}$$
is closed, for each $r>0$. Suppose that $(\hat{a},\hat{b})\in \overline{O}$, then there exists a sequence $\{(\hat{x}_{n},\hat{y}_{n})\}_{n=1}^{\infty}\subseteq \hat{\mathcal{H}}\times \hat{\mathcal{H}}$ such that
$\lim_{n}d_{\Phi}(\hat{x}_{n},\hat{a})=0$ and  $\lim_{n}d_{\Phi}(\hat{y}_{n},\hat{b})=0$. Thus, for every $\epsilon>0$ there exists $N\in \mathbb{N}$ such that  $\max\{d_{\Phi}(\hat{x}_{N},\hat{a}), d_{\Phi}(\hat{y}_{N},\hat{b})\}<\epsilon$. Hence,
\begin{eqnarray*}
d_{\phi}(\hat{a},\hat{b})&\leq& d_{\phi}(\hat{a},\hat{x}_{N})+d_{\phi}(\hat{x}_{N},\hat{y}_{N})+d_{\phi}(\hat{y}_{N},\hat{b})\\
&<& \epsilon+r+\epsilon=2\epsilon+r.
\end{eqnarray*}
Consequently, $d_{\phi}(\hat{a},\hat{b})\leq r$ and so $(\hat{a},\hat{b})\in O$.
\end{proof}
In view of Definition \ref{211} we note that the family $\{\rho_{\phi_{j}}\}_{j\in I}$ satisfies in condition $(ii)$. In addition,
since $\Phi$ yields phase retrieval, by applying Proposition 2.4.8 of \cite{book1}, this family separates points and closed sets. Hence, $(\hat{\mathcal{H}}, \tau_{\Phi},d_{\Phi})$ is a completely regular topometric space.

 Furthermore,  we examine the Lipschitz continuity of $\alpha_{\Phi}$ on  $\hat{\mathcal{H}}$  with respect to the metric $d_\Phi$.
 \begin{proposition}
The non-linear mapping $\alpha_\Phi: (\hat{\mathcal{H}}_n, d_\Phi) \to \ell^2(I_m)$ is bi-Lipschitz with respect to the metric $d_\Phi$. via  the lower and upper Lipschitz bounds $1$ and $\sqrt{m}$, respectively.
\end{proposition}
\begin{proof}
  Using the definition of the metric $d_\Phi$ associated with the phase retrieval frame $\Phi = \{\phi_j\}_{j \in I_m}$, we obtain:
\begin{eqnarray*}
d^{2}_{\phi}(\hat{x},\hat{y})&=&\max_{j\in I_{m}} \left\vert\vert\langle x,\phi_{j}\rangle\vert-\vert\langle y,\phi_{j} \rangle\vert\right\vert^{2}\\
&\leq& \sum_{j\in I_{m}}\left\vert\vert\langle x,\phi_{j}\rangle\vert-\vert\langle y,\phi_{j}\rangle\vert\right\vert^{2}\\
&=& \Vert \alpha_{\Phi}\hat{x}-\alpha_{\Phi}\hat{y}\Vert^{2}\\
&\leq& m d^{2}_{\phi}(\hat{x},\hat{y}).
\end{eqnarray*}
This  completes the proof.
\end{proof}



\subsection*{Availability of data and materials}
Data sharing not applicable to this article as no datasets were
generated during the preparation of this paper.

\subsection*{Funding} The authors received no financial support for the research,
authorship, and/or publication of this article.

\subsection*{Conflict of interest}
This work does not have any conflicts of interest.

\subsection*{Author Contributions}
All authors discussed the results and contributed to the final
manuscript.

\bibliographystyle{amsplain}

\end{document}